\newcommand{\lc}{\mathrel{\raise2pt\hbox{${\mathop<\limits_{\raise1pt\hbox{\mbox{$\sim$}}}}$}}}
\newcommand{\gc}{\mathrel{\raise2pt\hbox{${\mathop>\limits_{\raise1pt\hbox{\mbox{$\sim$}}}}$}}}
\newtheorem{thm}{Theorem}[section]
\newtheorem{lem}{Lemma}[section]
\newtheorem{remark}{Remark}[section]
\newtheorem{algorithm}{Algorithm}[section]
\numberwithin{equation}{section}
\begin{document}
\title{Symmetrized two-scale finite element discretizations for partial differential equations with symmetric solutions}

\author{Pengyu Hou\thanks{School of Mathematical Sciences, Beijing Normal University, Beijing 100875, China ({hou.pengyu@mail.bnu.edu.cn}).}
\and Fang Liu\thanks{School of Statistics and Mathematics, Central University of Finance and Economics, Beijing 102206, China ({fliu@cufe.edu.cn}).}
\and Aihui Zhou\thanks{LSEC, Institute of Computational Mathematics and Scientific/Engineering Computing, Academy of Mathematics and Systems Science, Chinese Academy of Sciences,
Beijing 100190, China ({azhou@lsec.cc.ac.cn})}.}

\maketitle

% REQUIRED
\begin{abstract}
In this paper, a symmetrized two-scale finite element method is proposed for a class of partial differential equations with symmetric solutions. With this method, the finite element approximation on a fine tensor product grid is reduced to the finite element approximations on a much coarse grid and a univariant fine grid. It is shown by both theory and numerics including electronic structure calculations that the resulting approximation still maintains an asymptotically optimal accuracy. Consequently the symmetrized two-scale finite element method reduces computational cost significantly.
\end{abstract}

% REQUIRED
\noindent {\bf Key words.}
symmetric, two-scale, finite element, partial differential equation.

% REQUIRED
\noindent {\bf AMS subject classifications.}
65N15, 65N25, 65N30, 65N50.

\section{Introduction}
To improve the efficiency of solving multi-dimensional elliptic source and eigenvalue problems on tensor-product domains, the two-scale finite element method was proposed in \cite{GLZ08,LZ06,LZ07}. With this method, low frequency parts of the finite element solution to an elliptic problem are captured on a coarse grid and high frequency components are handled by some univariant fine and coarse grids. 

We understand that many quantities in science and engineering have symmetries such as wavefunctions, Hamiltonians, and interatomic potentials in quantum physics. It is shown in literature that efficient numerical methods can be designed to reduce the computational cost of solving problems with symmetric properties (see, e.g., \cite{Bachmayr2021,Fang2013,Han2022} and references cited therein). For instance, a symmetry-based decomposition approach was proposed to solve eigenvalue problems with spatial symmetries in \cite{Fang2013}. By this approach, the original differential eigenvalue problem is decomposed into some eigenvalue subproblems that can be carried out in parallel. In addition, each subproblem requires only a smaller number of eigenpairs compared with the original problem. As a result, the computational cost is reduced.

To reduce the computational cost of approximations of the elliptic source and eigenvalue problems with symmetric solutions, in this paper, we study some symmetrized two-scale finite element discretizations.

Let us give an informal description of the main ideas and results in this paper. Set $k$ be a positive integer. The symmetric group $\text{Sym}(k)$ is the set of bijections of $\{1,2,\cdots,k\}$ to itself. Specially, a permutation $\sigma \in \text{Sym}(k)$ which interchanges two letters $i$ and $j$ and leaves all the other letters unchanged is called a transposition. Denote the
transposition $\sigma$ which interchanges $i$ and $j$ by $(i,j)$ \cite{Joyner2008}.

Set $\Omega=(0,1)^3$ and $\mathbf{x}=(x_1,x_2,x_3)$. We say that $u$ is a symmetric function on $\bar\Omega$  \cite{Bachmayr2021} if
\begin{equation*}
u(x_{\sigma(1)},x_{\sigma(2)},x_{\sigma(3)})=u(x_1,x_2,x_3), \quad \quad \forall \mathbf{x}\in \bar\Omega,~\sigma\in \text{Sym}(3).
\end{equation*}
Let ${T}^{h_1,h_2,h_3}(\Omega)$ be a uniform tensor product grid on $\bar\Omega$. $S^{h_1,h_2,h_3}(\Omega)$ is the associated tensor-product space of piecewise trilinear functions on $\bar{\Omega}$. Let $u_{h_1,h_2,h_3}\in S^{h_1,h_2,h_3}(\Omega)$ be a finite element approximation to $u$.

In computation, the grid points of $T^{h_1,h_2,h_3}(\Omega)$ are usually numbered along the $x_1$-direction, the $x_2$-direction, and the $x_3$-direction, consecutively. Let $N_i=1/h_i (i=1,2,3)$ be a positive integer. Then the values of $u_{h_1,h_2,h_3}$ on these grid points are stored in a vector denoted by $U^{h_1,h_2,h_3}$ with the size $N_{dof}:=\prod_{i=1}^3 (N_i+1)$. Let $G: \mathbb{C}^{N_{dof}}\rightarrow S^{h_1,h_2,h_3}(\Omega)$ be a bijection satisfying
\begin{equation*}
G(U^{h_1,h_2,h_3})=\sum_{i=1}^{N_{dof}} U^{h_1,h_2,h_3}_i \phi_i, \quad \quad \forall  U^{h_1,h_2,h_3}\in \mathbb{C}^{N_{dof}},
\end{equation*}
where $U^{h_1,h_2,h_3}_i$ is the $i$-th component of $U^{h_1,h_2,h_3}$ and $\phi_i$ is the Lagrangian basis function of $S^{h_1,h_2,h_3}(\Omega)$ with $i=1,2,\cdots,N_{dof}$. Obviously, we have $u_{h_1,h_2,h_3}=G(U^{h_1,h_2,h_3})$.

For any $\sigma \in \text{Sym}(3)$, there holds $u_{h_{\sigma(1)},h_{\sigma(2)},h_{\sigma(3)}}=G(U^{h_{\sigma(1)},h_{\sigma(2)},h_{\sigma(3)}})$. We define a vector transformation $\mathcal{T}_{\sigma}: \mathbb{C}^{N_{dof}}\rightarrow \mathbb{C}^{N_{dof}}$ satisfying
\begin{equation*}
\mathcal{T}_{\sigma}(U^{h_1,h_2,h_3})=U^{h_{\sigma(1)},h_{\sigma(2)},h_{\sigma(3)}},\quad \forall U^{h_1,h_2,h_3}\in  \mathbb{C}^{N_{dof}}.
\end{equation*}
Then $U^{h_{\sigma(1)},h_{\sigma(2)},h_{\sigma(3)}}$ can be obtained from $U^{h_1,h_2,h_3}$ by a symmetrization operation (Algorithm \ref{alg:symmetric-dd}). Let $R_{\sigma}: S^{h_1,h_2,h_3}(\Omega)\rightarrow S^{h_{\sigma(1)},h_{\sigma(2)},h_{\sigma(3)}}(\Omega)$ be a bijection defined by
\begin{equation}\label{eq:R-def}
R_{\sigma}=G\circ \mathcal{T}_{\sigma} \circ G^{-1}.
\end{equation}
We get
\begin{equation}\label{eq:trans-3d}
u_{h_{\sigma(1)},h_{\sigma(2)},h_{\sigma(3)}} =R_{\sigma}(u_{h_1,h_2,h_3}),\quad \forall u_{h_1,h_2,h_3}\in S^{h_1,h_2,h_3}(\Omega).
\end{equation}

Note that the two-scale finite element solution \cite{GLZ08,LZ06} of a linear elliptic source problem with the Dirichlet boundary condition is a linear combination of the standard finite element solutions on four different scale grids. That is,
$$
B^h_{H,H,H}u=u_{h,H,H}+u_{H,h,H}+u_{H,H,h}-2u_{H,H,H}.
$$
Hence, for a linear source problem with symmetric solution, we obtain from \eqref{eq:trans-3d} that $u_{H,h,H}=R_{(1,2)}(u_{h,H,H})$ and $u_{H,H,h}=R_{(1,3)}(u_{h,H,H})$. Consequently,
$$
B^h_{H,H,H}u=u_{h,H,H}+R_{(1,2)}(u_{h,H,H})+R_{(1,3)}(u_{h,H,H})-2u_{H,H,H}.
$$
It is seen from \eqref{eq:R-def} that the amount of operations for getting $R_{(1,2)}(u_{h,H,H})$ or $R_{(1,3)}(u_{h,H,H})$ from $u_{h,H,H}$ is essentially that for calculating $\mathcal{T}_{(1,2)}(U^{h,H,H})$ or $\mathcal{T}_{(1,3)}(U^{h,H,H})$ from $U^{h,H,H}$ by the symmetrization algorithm (Algorithm \ref{alg:symmetric-dd}), respectively. It is only $18 N_{dof}$ with $N_{dof}:=(n-1)(N-1)(N-1)$ if $n=1/h$ and $N=1/H$. While the amount of operations for computing $u_{H,h,H}$ or $u_{H,H,h}$ by the standard finite element method is far from $18 N_{dof}$, due to creating stiffness matrix and solving linear algebraic systems. Hence the symmetrization algorithm is quite efficient. Similarly, the symmetrization algorithm can be also combined with the postprocessed two-scale finite element method \cite{LSZ11} and the two-scale postprocessed finite element method \cite{LZ16} respectively to construct new and more efficient algorithms.

This paper is structured as follows. In section 2, some preliminaries are presented and the existing related results that will be used in this paper are called. In section 3, a vector transformation operator associated with symmetric functions is constructed. A symmetrization algorithm is then proposed to perform this vector transformation operator. In section 4 and section 5, combining the symmetrization algorithm with the two-scale finite element method, some symmetrized two-scale finite element discretizations are developed for the linear source and eigenvalue problems with symmetric solutions, respectively. In section 6, several numerical results, including the applications to electronic structure calculations, are presented to illustrate the efficiency of our approaches. Finally, some concluding remarks are given in section 7.

\section{Preliminaries}

Let $\Omega=(0,1)^d$ with $d\ge 2$. The standard notation for Sobolev spaces $W^{s,p}(\Omega)$ and their associated norms and seminorms will be used \cite{Cia78}. Set $H^s(\Omega)=W^{s,2}(\Omega)$ and $\lVert\ \cdot \ \rVert_{s,\Omega}=\lVert \ \cdot \ \rVert_{s,2,\Omega}$. Let $H_0^1(\Omega) = \{ v\in H^1(\Omega):  v\mid_{\partial \Omega} = 0 \}$, $H^{-1}(\Omega)$ be the dual of $H_0^1(\Omega)$, and $(\cdot,\cdot)$ be the standard $L^2(\Omega)$ inner product. We shall use $A\lesssim B$ to mean that $A\le CB$ for some constant $C$ which does not depend on any grid parameters.

Let $\mathbb{N}_0$ be the set of all nonnegative integers. For $w\in W^{s,p}(\Omega)$, $\mathbf{x}=(x_1,x_2,\cdot \cdot \cdot,x_d)\in \Omega$, and $\mathbf{\alpha} = (\alpha_1,\alpha_2,\cdot \cdot \cdot,\alpha_d)\in \mathbb{N}_0^d$, set
    \begin{equation*}
        (D^\mathbf{\alpha} w)(\mathbf{x})=(\frac{\partial^{\alpha_1}}{\partial x_1^{\alpha_1}}\cdot\cdot\cdot\frac{\partial^{\alpha_d}w}{\partial x_d^{\alpha_d}})(\mathbf{x}) \quad\quad \mbox{and}\quad\quad   \lvert \mathbf{\alpha} \rvert=\alpha_1+\alpha_2+\cdot\cdot\cdot+\alpha_d.
    \end{equation*}

Furthermore, set $\textbf{0}=(0,\cdot\cdot\cdot,0)\in\mathbb{R}^d$, $\textbf{e}=(1,\cdot\cdot\cdot,1)\in\mathbb{R}^d$, and $\mathbb{Z}_d={1,2,\cdot\cdot\cdot,d}$. For each $i\in\mathbb{Z}_d$, let $\textbf{e}_i=(0,\cdot\cdot\cdot,0,1,0,\cdot\cdot\cdot,0)\in\mathbb{R}^d$, which is the vector whose $i$-th component is one and other components are zero. Write $\hat{\textbf{e}}_i=\textbf{e}-\textbf{e}_i$.

We shall use the following Sobolev space
    \begin{equation*}
        W^{G,4}(\Omega)=\{w\in H^{3}(\Omega):D^\mathbf{\alpha} w\in L^2(\Omega), \mathbf{0}\le \mathbf{\alpha} \le 3\mathbf{e},\lvert\mathbf{\alpha}\rvert=4\}
    \end{equation*}
with its natural norm $\lVert\ \cdot\ \rVert_{W^{G,4}(\Omega)}$ \cite{PZ99}.

\subsection{A source problem}

Consider the following elliptic source problem:
\begin{eqnarray}\label{equ:boundary_value_pro}
    \left\{\begin{aligned}
     L u &= f \ \ \mbox{in}\ \ \Omega,\\
     u &= 0\ \ \mbox{on}\ \  \partial\Omega.
    \end{aligned}\right.
\end{eqnarray}
Here $f\in H^{-1}(\Omega)$ and $L$ is defined by
\begin{equation*}
    Lu = -\sum_{i,j=1}^{d}\frac{\partial}{\partial x_i}\big(a_{ij}\frac{\partial u}{\partial x_j}\big)+\sum_{i=1}^{d}b_i\frac{\partial u}{\partial x_i}+Vu,
\end{equation*}
where $a_{ij}\in W^{1,\infty}(\Omega),\ b_i,\ V\in L^{\infty}(\Omega)$, and $\sum_{i,j=1}^d a_{ij}x_ix_j\ge c\sum_{k=1}^d x_k^2 $ on $\bar{\Omega}$ for some constant $c > 0$.

A weak form of (\ref{equ:boundary_value_pro}) is: find $u\in H_0^1(\Omega)$ such that
\begin{equation}\label{equ:weak_form_boundary_value_pro}
    a(u,v) = (f,v)\ \ \ \ \forall v\in H_0^1(\Omega),
\end{equation}
where
\begin{equation}\label{equ:a_uv}
    a(u,v) = \int_\Omega\Big(\sum_{i,j=1}^{d}a_{ij}\frac{\partial u}{\partial x_i}\frac{\partial v}{\partial x_j}+\sum_{i=1}^{d}b_i\frac{\partial u}{\partial x_i}v+Vuv\Big) \quad \mbox{and} \quad  (f,v)=\int_\Omega fv.
\end{equation}
For each $f\in H^{-1}(\Omega)$, assume that there is a unique solution $u\in
H_0^1(\Omega)$ of \eqref{equ:weak_form_boundary_value_pro}.

Set $N\in\mathbb{N}_0 /\{0\}$ and $h = 1/N\in [0,1]$. Let $T^{h}[0,1]$ be the uniform grid with grid size $h$. Let $h_j=1/N_j$ for $N_j\in\mathbb{N}_0/\{0\},\ j\in\mathbb{Z}_d$. Set ${\bf h}=(h_1,\ldots,h_{d})$ and $\displaystyle h =\max_{j\in \mathbb{Z}_d} h_j$. Define a tensor-product grid on $\bar\Omega=[0,1]^d$ by
$$
T^{\bf h}(\Omega):= T^{h_1}[0,1]\times\cdots\times T^{h_{d}}[0,1].
$$
Let $S^{h}[0,1] \subset C[0,1]$ be the piecewise linear finite element
space on $T^h[0,1]$.
The tensor product spaces of piecewise $d$-linear
functions on $\bar\Omega$ are
$$
S^{\bf h}(\Omega) :=S^{h_1}[0,1]\otimes\cdots\otimes S^{h_{d}}[0,1]
\qquad\text{and}\qquad S_0^{\bf h}(\Omega) := S^{\bf h}(\Omega)\cap
H_0^1(\Omega).
$$

The standard Galerkin finite element method for solving (\ref{equ:weak_form_boundary_value_pro}) is: find $u_{\textbf{h}}\in S_0^{\textbf{h}}(\Omega)$ such that
\begin{equation}\label{equ:boundary_FEM}
    a(u_{\textbf{h}},v) = (f,v)\ \ \ \forall \ v\in S_0^{\textbf{h}}(\Omega).
\end{equation}
Define the Galerkin projector $P_{\bf h}:
H^1_0(\Omega)\mapsto S^{\bf h}_0(\Omega)$ by
\begin{equation}\label{Gprojection}
  {a}(w-P_{\bf h} w, v) =0 ~\quad \forall v\in S^{\bf h}_0(\Omega),\quad \forall w\in H^1_0(\Omega).
\end{equation}
Obviously $P_{\bf h}w=w_{\bf h},~\forall w\in H^1_0(\Omega)$.

\subsection{An eigenvalue problem}

Assume that $a(\cdot,\cdot)$ is the same as \eqref{equ:a_uv}
with $b_i=0, ~i=1,2,\cdots,d$ and $(a_{ij})$ is symmetric. Consider the following eigenvalue problem
\begin{eqnarray}\label{equ:weak_eigenvalue_pro}
  a(u,v)=\lambda (u,v) \quad\forall v\in H^1_0(\Omega),
\end{eqnarray}
where $\lambda$ is the eigenvalue of the symmetric bilinear form
$a(\cdot,\cdot)$ relative to the $L^2(\Omega)$ inner product
$(\cdot,\cdot)$ and $u$ is the corresponding eigenvector.
The eigenvalues of (\ref{equ:weak_eigenvalue_pro}) are real
numbers satisfying %\cite{BaOs89, BaOs91}
$\lambda_1<\lambda_2\le\lambda_3\le\cdots$ \cite{BaOs91}.
Assume that the corresponding eigenvectors $u_1, u_2,
u_3,\dots$ satisfy $(u_i, u_j)=\delta_{ij}$ for $i,j=1,2,\cdots$
with $\delta_{ij}$ the Dirac delta.

Consider the standard Galerkin finite element
method for solving (\ref{equ:weak_eigenvalue_pro}): find a pair
$(\lambda_{\bf h}, u_{\bf h})\in \mathbb{R}\times S^{\bf h}_0(\Omega)$
satisfying
\begin{eqnarray}\label{equ:eigenvalue_FEM}
  a(u_{\bf h},v)=\lambda_{\bf h} (u_{\bf h},v) ~~~\forall v\in S^{\bf h}_0(\Omega),
\end{eqnarray}
which has finite eigenvalues
$
\lambda_{1,{\bf h}}<\lambda_{2,{\bf h}}\le \cdots \le\lambda_{n_{\bf h},{\bf h}},
$
where $n_{\bf h}=\mbox{dim} ~S^{\bf h}_0(\Omega)$. Assume that the
corresponding eigenvectors $ u_{1,{\bf h}}, u_{2,{\bf h}},\dots,
u_{n_{\bf h},{\bf h}}$ satisfy $ (u_{i,{\bf h}},$ $ u_{j,{\bf
    h}})=\delta_{ij}$. The Rayleigh principle \cite{BaOs91}
implies that $\lambda_i\le \lambda_{i,{\bf h}} \quad\text{for }i=1,2,\dots,n_{\bf h}.$

Suppose that $h=\displaystyle\max_{j\in\mathbb{Z}_d} h_j \ll 1, \lambda$ is simple, and $(\lambda_{\bf h},u_{\bf h})$  is
an approximation to $(\lambda,u)$ which satisfies \eqref{equ:eigenvalue_FEM}
and Lemma 3.2 in \cite{LSZ11} with
$\|u_{\bf h}\|_{0,\Omega}=1$. There holds \cite{BaOs91}
\begin{equation}\label{eigen-error-estimate}
  \lambda_{\bf h}-\lambda+\|u-u_{\bf h}\|_{0,\Omega}+h\|u-u_{\bf h}\|_{1,\Omega}\lesssim h^2.
\end{equation}

\begin{lem}\cite{GLZ08}\label{lemma3}
There holds
\begin{equation}\label{lemma3-eq}
\lambda_{\bf h}-\lambda=\lambda(u,u-P_{\bf h}u)+\mathcal{O}(h^{4}).
\end{equation}
\end{lem}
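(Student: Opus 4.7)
The plan is to derive the standard Rayleigh-quotient identity for the eigenvalue error and then replace the discrete eigenvector $u_{\bf h}$ by the Galerkin projection $P_{\bf h} u$ at a cost that is only $\mathcal{O}(h^4)$. Under the normalization $\|u_{\bf h}\|_{0,\Omega}=\|u\|_{0,\Omega}=1$, expanding $a(u-u_{\bf h},u-u_{\bf h})$ and $(u-u_{\bf h},u-u_{\bf h})$ with the eigenrelations $a(u,u)=\lambda$, $a(u_{\bf h},u_{\bf h})=\lambda_{\bf h}$ and $a(u,v)=\lambda(u,v)$ yields
\begin{equation*}
\lambda_{\bf h}-\lambda = a(u-u_{\bf h},u-u_{\bf h}) - \lambda\,\|u-u_{\bf h}\|_{0,\Omega}^{2}.
\end{equation*}
By \eqref{eigen-error-estimate} we have $\|u-u_{\bf h}\|_{0,\Omega}\lesssim h^{2}$, so the second term is already $\mathcal{O}(h^{4})$.

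Next I would introduce the splitting $u-u_{\bf h}=(u-P_{\bf h}u)+(P_{\bf h}u-u_{\bf h})$ and compute
\begin{equation*}
a(u-u_{\bf h},u-u_{\bf h})-a(u-P_{\bf h}u,u-P_{\bf h}u)
= a\bigl(u-u_{\bf h}+u-P_{\bf h}u,\ P_{\bf h}u-u_{\bf h}\bigr).
\end{equation*}
Since $P_{\bf h}u-u_{\bf h}\in S_{0}^{\bf h}(\Omega)$, the Galerkin orthogonality \eqref{Gprojection} kills the contribution of $u-P_{\bf h}u$, leaving $a(u-u_{\bf h},P_{\bf h}u-u_{\bf h})$. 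Applying the two eigenrelations to this cross term gives
\begin{equation*}
a(u-u_{\bf h},P_{\bf h}u-u_{\bf h})
=\lambda(u-u_{\bf h},P_{\bf h}u-u_{\bf h})+(\lambda-\lambda_{\bf h})(u_{\bf h},P_{\bf h}u-u_{\bf h}).
\end{equation*}
A triangle inequality $\|P_{\bf h}u-u_{\bf h}\|_{0,\Omega}\le\|u-P_{\bf h}u\|_{0,\Omega}+\|u-u_{\bf h}\|_{0,\Omega}\lesssim h^{2}$, combined with \eqref{eigen-error-estimate}, shows both pieces are $\mathcal{O}(h^{4})$.

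It then remains to identify $a(u-P_{\bf h}u,u-P_{\bf h}u)$ with $\lambda(u,u-P_{\bf h}u)$. Using symmetry of $a(\cdot,\cdot)$, Galerkin orthogonality ($a(u-P_{\bf h}u,P_{\bf h}u)=0$), and the eigenrelation $a(u,\cdot)=\lambda(u,\cdot)$, one gets
\begin{equation*}
a(u-P_{\bf h}u,u-P_{\bf h}u)=a(u-P_{\bf h}u,u)=a(u,u-P_{\bf h}u)=\lambda(u,u-P_{\bf h}u).
\end{equation*}
Collecting the estimates yields \eqref{lemma3-eq}.

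The main obstacle is the cross-term estimate in the middle step: one has to be careful that although $u-P_{\bf h}u$ and $P_{\bf h}u-u_{\bf h}$ are each only $\mathcal{O}(h)$ in the energy norm, Galerkin orthogonality removes the dominant contribution so that only products of $L^{2}$-quantities of order $h^{2}$ survive. This is where the $L^{2}$-bound $\|u-u_{\bf h}\|_{0,\Omega}\lesssim h^{2}$ (and the ensuing bound on $\|P_{\bf h}u-u_{\bf h}\|_{0,\Omega}$) is essential; the rest is algebraic manipulation using symmetry and orthogonality.
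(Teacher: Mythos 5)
Your argument is correct. Since the paper gives no proof of Lemma \ref{lemma3} (it is quoted from \cite{GLZ08}), the comparison is with the standard argument behind that citation, which starts from the same Rayleigh-quotient identity
\begin{equation*}
\lambda_{\bf h}-\lambda \;=\; a(u-u_{\bf h},u-u_{\bf h})-\lambda\,\|u-u_{\bf h}\|_{0,\Omega}^{2},
\end{equation*}
but then expands $a(u-u_{\bf h},u-u_{\bf h})$ about $u-P_{\bf h}u$: by Galerkin orthogonality the cross term vanishes (a Pythagorean identity), and the quadratic remainder $a(P_{\bf h}u-u_{\bf h},P_{\bf h}u-u_{\bf h})$ is bounded by continuity of $a(\cdot,\cdot)$ together with the superconvergence bound $\|P_{\bf h}u-u_{\bf h}\|_{1,\Omega}\lesssim h^{2}$ --- precisely the Lemma 3.2 of \cite{LSZ11} that the paper assumes in Section 2.2. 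You instead use a difference-of-squares factorization and convert the surviving cross term $a(u-u_{\bf h},P_{\bf h}u-u_{\bf h})$ with the two eigenrelations, so that only $L^{2}$-quantities remain. What this buys you is independence from the $H^{1}$ superconvergence lemma; what it costs is the Ritz-projection duality estimate $\|u-P_{\bf h}u\|_{0,\Omega}\lesssim h^{2}$, which you invoke silently inside the triangle inequality. That estimate is standard (Aubin--Nitsche on the convex domain $\Omega$ with the paper's coefficient assumptions and $u\in H^{2}(\Omega)$), but it is the one ingredient you use that is neither proved nor explicitly assumed in the paper; you could avoid it entirely by bounding $\|P_{\bf h}u-u_{\bf h}\|_{0,\Omega}\le \|P_{\bf h}u-u_{\bf h}\|_{1,\Omega}\lesssim h^{2}$ via the assumed Lemma 3.2 of \cite{LSZ11}. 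With that point made explicit, every identity in your proof (the Rayleigh-quotient identity, the factorization, the eigenrelation conversion of the cross term, and $a(u-P_{\bf h}u,u-P_{\bf h}u)=\lambda(u,u-P_{\bf h}u)$) checks out, and the collected remainders are all $\mathcal{O}(h^{4})$ as claimed.
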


\section{A vector transformation operator and its implementation}\label{sec:sym-alg}

In this section, we will introduce a vector transformation operator associated with symmetric functions. A so-called symmetrization algorithm will be proposed to implement this vector transformation. We will combine the symmetrization algorithm with the two-scale finite element method to design new and efficient algorithms in Section \ref{sec:sym-ts}.

We call $u$ a symmetric function on $\bar{\Omega}$ if
\begin{equation}\label{eq:usym}
u(x_{\sigma(1)},x_{\sigma(2)},\cdots,x_{\sigma(d)})=u(x_1,x_2,\cdots,x_d), \quad \quad \forall \mathbf{x}\in \bar\Omega,~\sigma\in \text{Sym}(d).
\end{equation}
Thus we have
\begin{align}\label{eq:sym-dd-grid3}
u(i_{\sigma(1)} h_{\sigma(1)},i_{\sigma(2)} h_{\sigma(2)}, \cdots, i_{\sigma(d)} h_{\sigma(d)}) &= u(i_1 h_1,i_2 h_2, \cdots, i_d h_d), \nonumber \\
                         &\quad\quad  i_k=0,1,\cdots,N_k; k=1,2,\cdots,d.
\end{align}

Next we illustrate how to get the values of $u(\mathbf{x})$ on the grid points of $T^{h_{\sigma(1)},h_{\sigma(2)},\cdots,h_{\sigma(d)}}(\Omega)$ from those on the grid points of  $T^{h_1,h_2,\cdots,h_d}(\Omega)$ in computation. The values of $u(\mathbf{x})$ on the grid points of $T^{h_1,h_2,\cdots,h_d}(\Omega)$ are stored in a vector $U^{h_1,h_2,\cdots,h_d}$ with the size $N_{dof}:=\prod_{i=1}^d (N_i+1)$. Usually the grid points of $T^{h_1,h_2,\cdots,h_d}(\Omega)$ are numbered along the $x_1$-direction, the $x_2$-direction, $\cdots$, and the $x_d$-direction, consecutively. That is, the $\hat{I}$-th component of $U^{h_1,h_2,\cdots,h_d}$ satisfies
\begin{align}\label{eq:sym-dd-grid0}
U^{h_1,h_2,\cdots,h_d}_{\hat{I}}& = u(i_1 h_1,i_2 h_2, \cdots, i_d h_d), \nonumber \\
                         & \quad\quad i_k=0,1,\cdots,N_k; k=1,2,\cdots,d,
\end{align}
where the subscript
\begin{equation}\label{eq:sym-dd-index1}
\hat{I}:=I(i_1,i_2,\cdots,i_d) =\sum_{l=2}^d \left(i_l \prod_{j=1}^{l-1} (N_j+1)\right)+i_1+1.
\end{equation}
The values of $u(\mathbf{x})$ on the grid points of  $T^{h_{\sigma(1)},h_{\sigma(2)},\cdots,h_{\sigma(d)}}(\Omega)$ are contained in the vector $U^{h_{\sigma(1)},h_{\sigma(2)},\cdots,h_{\sigma(d)}}$. We obtain from \eqref{eq:sym-dd-grid3}-\eqref{eq:sym-dd-index1} that
\begin{align}\label{eq:sym-dd-grid4}
U^{h_{\sigma(1)},h_{\sigma(2)},\cdots,h_{\sigma(d)}}_{I(i_{\sigma(1)},i_{\sigma(2)},\cdots,i_{\sigma(d)})}& = U^{h_1,h_2,\cdots,h_d}_{I(i_1,i_2,\cdots,i_d)}, \nonumber \\
                         & \quad\quad i_k=0,1,\cdots,N_k; k=1,2,\cdots,d,
\end{align}
with
\begin{equation*}
I(i_{\sigma(1)},i_{\sigma(2)},\cdots,i_{\sigma(d)}) =\sum_{l=2}^d \left(i_{\sigma(l)} \prod_{j=1}^{l-1} (N_{\sigma(j)}+1)\right)+i_{\sigma(1)}+1.
\end{equation*}
Hence for any $\sigma\in \text{Sym}(d)$, one can get $U^{h_{\sigma(1)},h_{\sigma(2)},\cdots,h_{\sigma(d)}}$ from $U^{h_1,h_2,\cdots,h_d}$. We can define a vector transformation operator $\mathcal{T}_{\sigma}: \mathbb{C}^{N_{dof}}\rightarrow \mathbb{C}^{N_{dof}}$ satisfying
\begin{equation}\label{eq:F-def}
\mathcal{T}_{\sigma}(U^{h_1,h_2,\cdots,h_d})=U^{h_{\sigma(1)},h_{\sigma(2)},\cdots,h_{\sigma(d)}}, \quad\quad \forall U^{h_1,h_2,\cdots,h_d}\in \mathbb{C}^{N_{dof}}.
\end{equation}
This vector transformation operator can be implemented by Algorithm \ref{alg:symmetric-dd}, which is called the symmetrization algorithm.

\begin{algorithm}
% \caption{}
\label{alg:symmetric-dd}
    \hspace*{0.02in} {\bf Input:}
       $h_1$, $h_2$, $\cdots$, $h_d$, $U^{h_1,h_2,\cdots,h_d}$, $\sigma$.\\
    \hspace*{0.02in} {\bf Output:}
       $U^{h_{\sigma(1)},h_{\sigma(2)},\cdots,h_{\sigma(d)}}$.
    \begin{algorithmic}[1]
       \State{ $N_1 = 1/h_1;\ N_2 = 1/h_2;\cdots;\ N_d = 1/h_d$}.
       \For{$i_1 = 0 : N_1$}
           \For{$i_2 = 0 : N_2$}
             \State {$\cdots$}
               \For{$i_d = 0 : N_d$}
                   \State{ $U^{h_{\sigma(1)},h_{\sigma(2)},\cdots,h_{\sigma(d)}}_{I(i_{\sigma(1)},i_{\sigma(2)},\cdots,i_{\sigma(d)})} = U^{h_1,h_2,\cdots,h_d}_{I(i_1,i_2,\cdots,i_d)}$.}
               \EndFor
           \EndFor
        \EndFor
        \State \Return $U^{h_{\sigma(1)},h_{\sigma(2)},\cdots,h_{\sigma(d)}}$.
    \end{algorithmic}
\end{algorithm}

Calculating $I(i_1,i_2,\cdots,i_d)$ in \eqref{eq:sym-dd-index1} requires $d^2$ operations. Hence the amount of operations for Algorithm \ref{alg:symmetric-dd} is $2d^2 N_{dof}$. Next we illustrate the idea of Algorithm \ref{alg:symmetric-dd} further for $d=2, 3$, respectively.

\subsection{Two dimensional case}
Let $\Omega=(0,1)^2$. We see that there is only one element $\sigma=(1,2)$ in $\text{Sym}(2)$ besides the identical operator. Assume that $u$ is a symmetric function on $\bar\Omega$. Namely,
\begin{equation}\label{eq:usym-2d}
u(x_2,x_1)=u(x_{\sigma(1)},x_{\sigma(2)})=u(x_1,x_2),\ \ \forall \mathbf{x}\in \bar\Omega.
\end{equation}
Consequently for the grid points of $T^{h_2,h_1}(\Omega)=T^{h_{\sigma(1)},h_{\sigma(2)}}(\Omega)$ and $T^{h_1, h_2}(\Omega)$, we have
\begin{align}\label{eq:sym-2d-grid1}
u(i_2 h_2,i_1 h_1)=u(i_{\sigma(1)} h_{\sigma(1)},i_{\sigma(2)} h_{\sigma(2)})&= u(i_1 h_1,i_2 h_2), \nonumber\\
& i_k=0,1,\cdots,N_k; k=1,2.
\end{align}
In implementation, the values of $u(x_1,x_2)$ on the grid points of a uniform grid $T^{h_1,h_2}(\Omega)$ are stored in a vector $U^{h_1,h_2}$ with the size $N_{dof}:=(N_1+1)\times (N_2+1)$. Usually the grid points of a uniform grid $T^{h_1,h_2}(\Omega)$ are numbered along the $x_1$-direction and the $x_2$-direction, consecutively. That is, the $\hat{I}$-th component of $U^{h_1,h_2}$ satisfies
\begin{equation}\label{eq:sym-2d-grid0}
U^{h_1,h_2}_{\hat{I}}=u(i_1 h_1,i_2 h_2),
                          \quad\quad i_k=0,1,\cdots,N_k; k=1,2,
\end{equation}
where the subscript
\begin{equation}\label{eq:sym-2d-index1}
\hat{I}:=I(i_1,i_2)={i_2(N_1+1)+i_1+1}.
\end{equation}
The values of $u(x_2,x_1)$ on the grid points of $T^{h_2,h_1}(\Omega)$ are stored in the vector $U^{h_2,h_1}$. By \eqref{eq:sym-2d-grid1} -- \eqref{eq:sym-2d-index1}, we have
\begin{equation}\label{eq:trans-2d2}
  U^{h_2,h_1}_{I(i_2,i_1)}=U^{h_1,h_2}_{I(i_1,i_2)},\quad\quad i_k=0,1,\cdots,N_k; k=1,2,
\end{equation}
with
\begin{equation*}
I(i_2,i_1)={i_1(N_2+1)+i_2+1}.
\end{equation*}
That is, one can get $U^{h_2,h_1}$ from $U^{h_1,h_2}$. Let $d=2$ in \eqref{eq:F-def}. It follows that $\mathcal{T}_{(1,2)}(U^{h_1,h_2})=U^{h_2,h_1}$.

\begin{figure}[htbp]
%\vspace{-0.5cm}
\centering
\includegraphics[width=10cm]{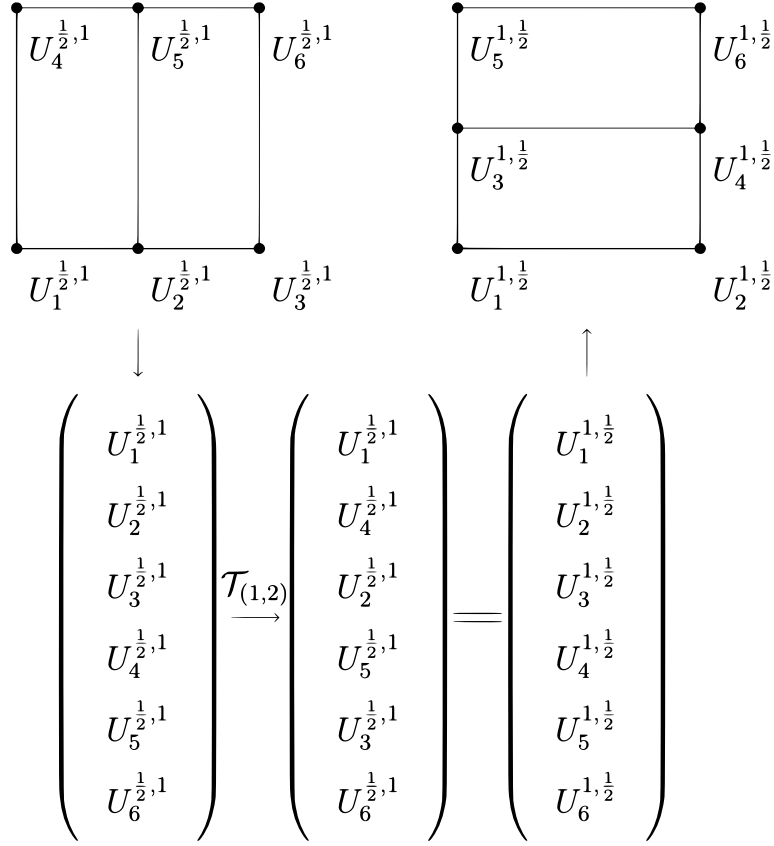}
\caption{The process of obtaining the vector $U^{1,\frac{1}{2}}$ from $U^{\frac{1}{2},1}$. For example, $U^{1,\frac{1}{2}}_{2} = U^{1,\frac{1}{2}}_{I(1,0)} = u(1,0) = u(0,1) = U^{\frac{1}{2},1}_{I(0,1)} = U_4^{\frac{1}{2},1}$ by \eqref{eq:sym-2d-grid0}, \eqref{eq:sym-2d-index1}, and \eqref{eq:trans-2d2}.}
  \label{fig:sym-2d-0}
\end{figure}

\begin{figure}[htbp]
%\vspace{-0.5cm}
\centering
\includegraphics[width=10cm]{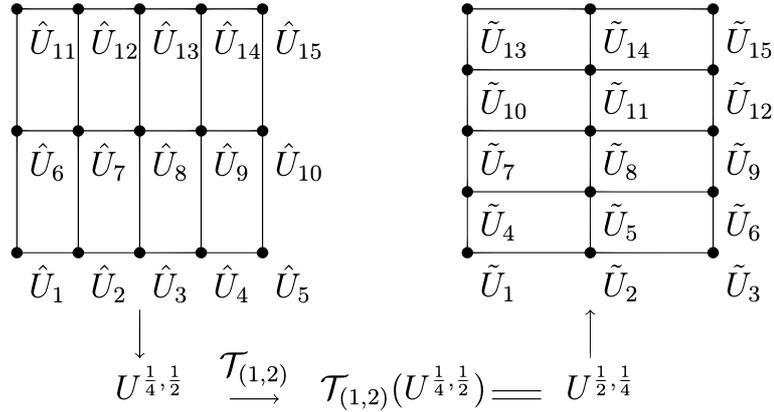}
\caption{The process of obtaining the vector $\tilde{U}:= U^{\frac{1}{2},\frac{1}{4}}$ from $\hat{U}:= U^{\frac{1}{4},\frac{1}{2}}$. For example, $U^{\frac{1}{2},\frac{1}{4}}_{11} = U^{\frac{1}{2},\frac{1}{4}}_{I(1,3)}=u(\frac{1}{2},\frac{3}{4})=u(\frac{3}{4},\frac{1}{2})= U^{\frac{1}{4},\frac{1}{2}}_{I(3,1)}= U^{\frac{1}{4},\frac{1}{2}}_9$ by \eqref{eq:sym-2d-grid0}, \eqref{eq:sym-2d-index1}, and \eqref{eq:trans-2d2}.}
  \label{fig:sym-2d-1}
\end{figure}

For example, if $h_1=\frac{1}{2}$ and $h_2=1$, it is shown in Figure \ref{fig:sym-2d-0} that the values of $u(x_1,x_2)$ on the grid points of the uniform grids $T^{\frac{1}{2},1}(\Omega)$ and $T^{1,\frac{1}{2}}(\Omega)$ are stored in the vectors $U^{\frac{1}{2},1}$ and $U^{1,\frac{1}{2}}$, respectively. By Algorithm \ref{alg:symmetric-dd}, $U^{1,\frac{1}{2}}=\mathcal{T}_{(1,2)}(U^{\frac{1}{2},1})$ can be obtained from $U^{\frac{1}{2},1}$. Similarly, if $h_1=\frac{1}{4}$ and $h_2=\frac{1}{2}$, The process of obtaining the vector $U^{\frac{1}{2},\frac{1}{4}}$ from $ U^{\frac{1}{4},\frac{1}{2}}$ is illustrated in Figure \ref{fig:sym-2d-1}, in which $\hat{U}:= U^{\frac{1}{4},\frac{1}{2}}$ and $\tilde{U}:=U^{\frac{1}{2},\frac{1}{4}}$ for simplicity.

The process of obtaining the vectors $U^{h_{\sigma(1)},h_{\sigma(2)}}=\mathcal{T}_{\sigma}(U^{h_1,h_2})$ from $U^{h_1,h_2}$  can be implemented by Algorithm \ref{alg:symmetric-dd} with $d=2$, which demands $8 N_{dof}$ operations.

\subsection{Three dimensional case}

Let $\Omega=(0,1)^3$. Assume that $u$ is a symmetric function on $\bar\Omega$. That is,
$$
u(x_{\sigma(1)},x_{\sigma(2)},x_{\sigma(3)})=u(x_1,x_2,x_3),\quad \forall \mathbf{x}\in \bar\Omega,~\sigma\in \text{Sym}(3).
$$
Thus for the grid points of $T^{h_{\sigma(1)},h_{\sigma(2)},h_{\sigma(3)}}(\Omega)$ and $T^{h_1,h_2,h_3}(\Omega)$, we have
\begin{align}\label{eq:sym-3d-grid1}
u(i_{\sigma(1)} h_{\sigma(1)},i_{\sigma(2)} h_{\sigma(2)},i_{\sigma(3)}h_{\sigma(3)})&=u(i_1 h_1,i_2 h_2,i_3 h_3), \nonumber\\
                 & i_k=0,1,\cdots,N_k; k=1,2,3.
\end{align}
In implementation, similarly, the values of $u(x_1,x_2,x_3)$ on the grid points of $T^{h_1,h_2,h_3}(\Omega)$ are stored in a vector denoted by $U^{h_1,h_2,h_3}$ with the size $N_{dof}:=\prod_{i=1}^3 (N_i+1)$. The grid points of $T^{h_1,h_2,h_3}(\Omega)$ are usually numbered along the $x_1$-direction, the $x_2$-direction, and the $x_3$-direction, consecutively. Thus the $\hat{I}$-th component of $U^{h_1,h_2,h_3}$ satisfies
\begin{align}\label{eq:sym-3d-grid0}
U^{h_1,h_2,h_3}_{\hat{I}}&=u(i_1 h_1,i_2 h_2,i_3 h_3),  \nonumber\\
& i_k=0,1,\cdots,N_k; k=1,2,3,
\end{align}
where the subscript
\begin{equation}\label{eq:sym-3d-index0}
\hat{I}:=I(i_1,i_2,i_3)=i_3(N_1+1)(N_2+1)+ i_2(N_1+1)+i_1+1.
\end{equation}
The values of $u(\mathbf{x})$ on the grid points of $T^{h_{\sigma(1)},h_{\sigma(2)},h_{\sigma(3)}}(\Omega)$ are contained in the vector $U^{h_{\sigma(1)},h_{\sigma(2)},h_{\sigma(3)}}$. By \eqref{eq:sym-3d-grid1}, \eqref{eq:sym-3d-grid0}, and \eqref{eq:sym-3d-index0}, we obtain
\begin{align}\label{eq:trans-3d2}
U^{h_{\sigma(1)},h_{\sigma(2)},h_{\sigma(3)}}_{I(i_{\sigma(1)},i_{\sigma(2)},i_{\sigma(3)})}&=U^{h_1,h_2,h_3}_{I(i_1,i_2,i_3)},\nonumber\\
& i_k=0,1,\cdots,N_k; k=1,2,3,
\end{align}
with
\begin{equation*}
I(i_{\sigma(1)},i_{\sigma(2)},i_{\sigma(3)})=i_{\sigma(3)}(N_{\sigma(1)}+1)(N_{\sigma(2)}+1)+ i_{\sigma(2)}(N_{\sigma(1)}+1)+i_{\sigma(1)}+1.
\end{equation*}
That is, the vector $U^{h_{\sigma(1)},h_{\sigma(2)},h_{\sigma(3)}}$ can be obtained from $U^{h_1,h_2,h_3}$. Let $d=3$ in \eqref{eq:F-def}. It follows that $\mathcal{T}_{\sigma}(U^{h_1,h_2,h_3})=U^{h_{\sigma(1)},h_{\sigma(2)},h_{\sigma(3)}}$.

For example, if $h_1=\frac{1}{4}$, $h_2=\frac{1}{2}$, and $h_3=\frac{1}{2}$, the components of the vector $U^{\frac{1}{4},\frac{1}{2},\frac{1}{2}}$ are illustrated in Figures \ref{fig:sym-3d-1} and \ref{fig:sym-3d-2}, where $\hat{U}:= U^{\frac{1}{4},\frac{1}{2},\frac{1}{2}}$ for simplicity.

\begin{figure}[H]
%\vspace{-0.5cm}
\centering
\includegraphics[width=4cm]{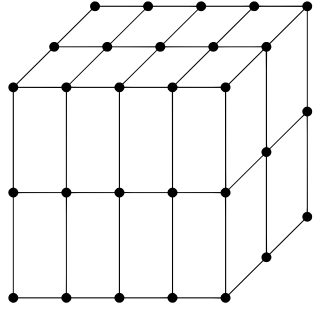}
\caption{Grid points for $U^{\frac{1}{4},\frac{1}{2},\frac{1}{2}}$.}\label{fig:sym-3d-1}
\end{figure}

\begin{figure}[H]
%\vspace{-0.5cm}
\centering
\includegraphics[width=13cm]{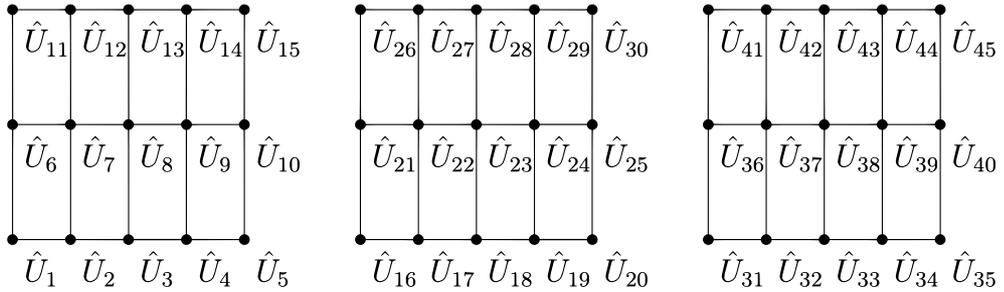}
 \caption{Components of $\hat{U}:= U^{\frac{1}{4},\frac{1}{2},\frac{1}{2}}$ on each layer along the $x_3$-direction. For example, $\hat{U}_{24}:= U^{\frac{1}{4},\frac{1}{2},\frac{1}{2}}_{24} = U^{\frac{1}{4},\frac{1}{2},\frac{1}{2}}_{I(3,1,1)}=u(\frac{3}{4},\frac{1}{2},\frac{1}{2})$ by \eqref{eq:sym-3d-grid0} and \eqref{eq:sym-3d-index0}.}\label{fig:sym-3d-2}
\end{figure}

The vectors $U^{\frac{1}{2},\frac{1}{4},\frac{1}{2}}=\mathcal{T}_{(1,2)}(U^{\frac{1}{4},\frac{1}{2},\frac{1}{2}})$ and $U^{\frac{1}{2},\frac{1}{2},\frac{1}{4}}=\mathcal{T}_{(1,3)}(U^{\frac{1}{4},\frac{1}{2},\frac{1}{2}})$ can be obtained from $U^{\frac{1}{4},\frac{1}{2},\frac{1}{2}}$ with $\sigma=(1,2)$ and $(1,3)$ in \eqref{eq:trans-3d2}, respectively. It is illustrated in Figures \ref{fig:sym-3d-3} and \ref{fig:sym-3d-4}.

\begin{figure}[H]
%\vspace{-0.5cm}
\centering
\includegraphics[width=13cm]{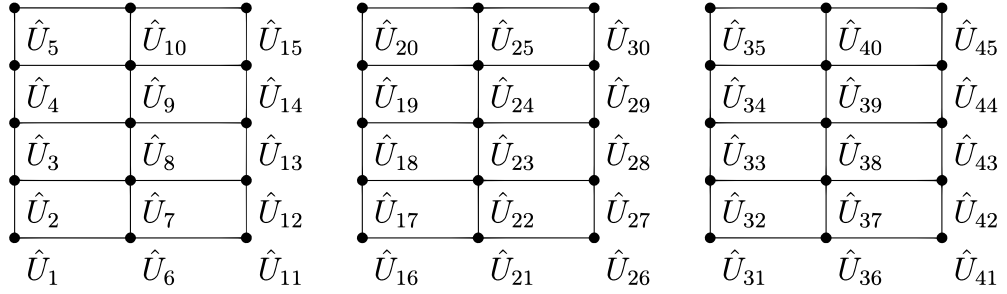}
  \caption{On each layer along the $x_3$-direction, we obtain $U^{\frac{1}{2},\frac{1}{4},\frac{1}{2}}$ from $\hat{U}:=U^{\frac{1}{4},\frac{1}{2},\frac{1}{2}}$ by \eqref{eq:trans-3d2}. For example, $U^{\frac{1}{2},\frac{1}{4},\frac{1}{2}}_{I(0,3,1)}= u(0,\frac{3}{4},\frac{1}{2})=u(\frac{3}{4},0,\frac{1}{2})=U^{\frac{1}{4},\frac{1}{2},\frac{1}{2}}_{I(3,0,1)}$. That is, $U^{\frac{1}{2},\frac{1}{4},\frac{1}{2}}_{25}=U^{\frac{1}{4},\frac{1}{2},\frac{1}{2}}_{19}=\hat{U}_{19}$. }\label{fig:sym-3d-3}
\end{figure}

\begin{figure}[H]
%\vspace{-0.5cm}
\centering
\includegraphics[width=13cm]{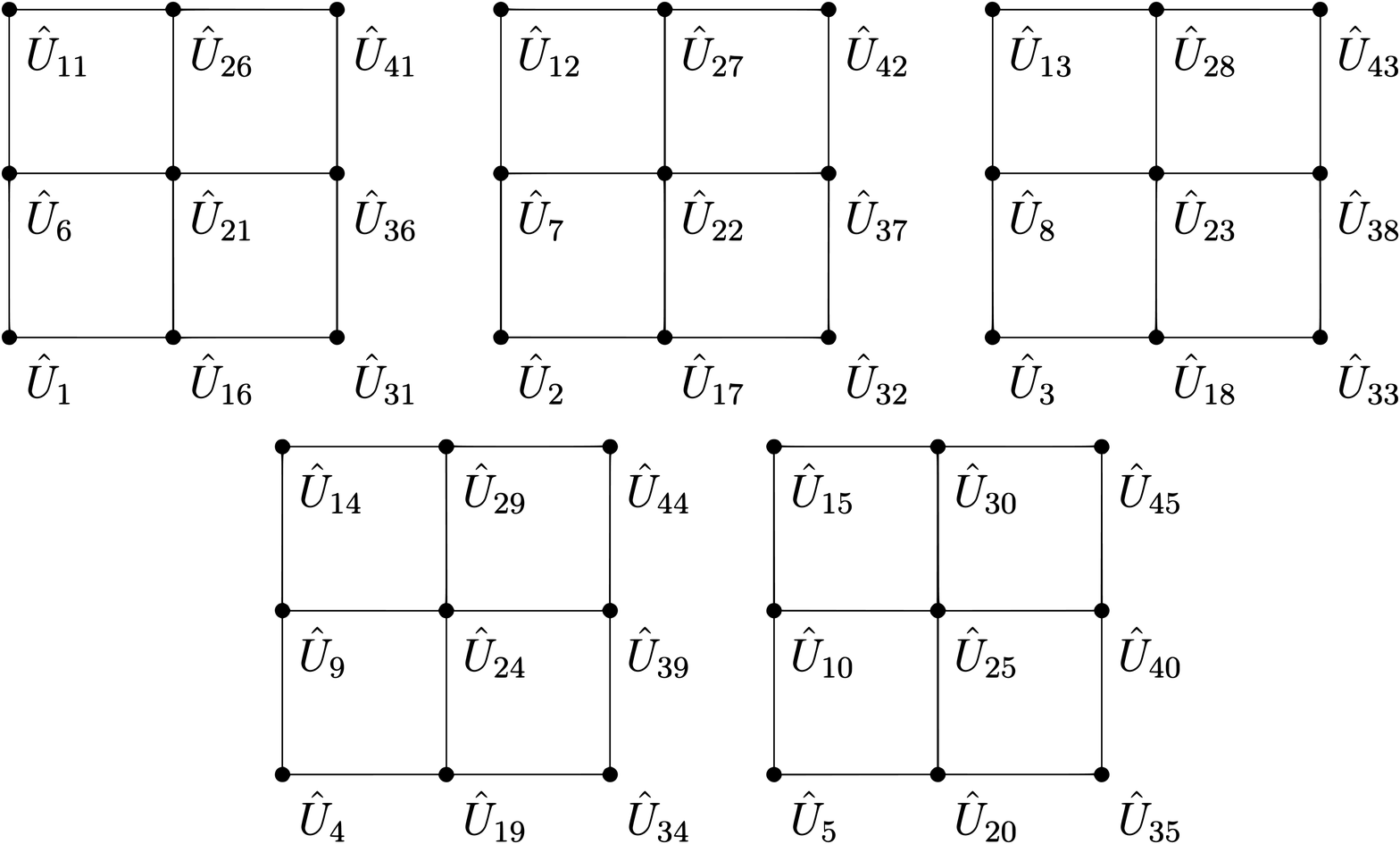}
  \caption{On each layer along the $x_3$-direction, we get $U^{\frac{1}{2},\frac{1}{2},\frac{1}{4}}$ from $\hat{U}:= U^{\frac{1}{4},\frac{1}{2},\frac{1}{2}}$ by \eqref{eq:trans-3d2}.  For example, $U^{\frac{1}{2},\frac{1}{2},\frac{1}{4}}_{I(0,2,1)}=u(0,1,\frac{1}{4})=u(\frac{1}{4},1,0)=U^{\frac{1}{4},\frac{1}{2},\frac{1}{2}}_{I(1,2,0)}$. That is, $U^{\frac{1}{2},\frac{1}{2},\frac{1}{4}}_{16}=U^{\frac{1}{4},\frac{1}{2},\frac{1}{2}}_{12}=\hat{U}_{12}$. }\label{fig:sym-3d-4}
\end{figure}

The process of obtaining the vectors $U^{h_2,h_1,h_3}=U^{h_{(1,2)(1)},h_{(1,2)(2)},h_{(1,2)(3)}}$ or $U^{h_3,h_2,h_1}=U^{h_{(1,3)(1)},h_{(1,3)(2)},h_{(1,3)(3)}}$ from $U^{h_1,h_2,h_3}$ can be implemented by Algorithm \ref{alg:symmetric-dd} with $d=3$, which requires $18 N_{dof}$ operations.

\section{The symmetrized two-scale finite element approximations}\label{sec:sym-ts}

For the source problem \eqref{equ:weak_form_boundary_value_pro} and eigenvalue problem \eqref{equ:weak_eigenvalue_pro}, the two-scale finite element method has been developed \cite{GLZ08,LZ06}. In this section, for  \eqref{equ:weak_form_boundary_value_pro} and \eqref{equ:weak_eigenvalue_pro} with symmetric solutions, Algorithm \ref{alg:symmetric-dd} will be combined with the two-scale finite element method.

Let $v$ be a symmetric function and $v_{h_1,h_2,\cdots,h_d}\in {S}^{h_1,h_2,\cdots,h_d}(\Omega)$ be a standard finite element approximation to $v$. As stated in Section \ref{sec:sym-alg}, usually the grid points of $T^{h_1,h_2,\cdots,h_d}(\Omega)$ are numbered along the $x_1$-direction, the $x_2$-direction, $\cdots$, and the $x_d$-direction, consecutively. The values of $v_{h_1,h_2,\cdots,h_d}$ on these nodes are stored in a vector denoted by $V^{h_1,h_2,\cdots,h_d}$ with the size $N_{dof}:=\prod_{i=1}^d (N_i+1)$.

Define a bijection $G: \mathbb{C}^{N_{dof}}\rightarrow S^{h_1,h_2,\cdots,h_d}(\Omega)$ satisfying
\begin{equation*}
G(V^{h_1,h_2,\cdots,h_d})=\sum_{i=1}^{N_{dof}} V^{h_1,h_2,\cdots,h_d}_i \phi_i, \quad \quad \forall  V^{h_1,h_2,\cdots,h_d}\in \mathbb{C}^{N_{dof}},
\end{equation*}
where $V^{h_1,h_2,\cdots,h_d}_i$ is the $i$-th component of $V^{h_1,h_2,\cdots,h_d}$ and $\phi_i$ is the Lagrangian basis of $S^{h_1,h_2,\cdots,h_d}(\Omega)$ with $i=1,2,\cdots,N_{dof}$. Obviously we have $v_{h_1,h_2,\cdots,h_d}=G(V^{h_1,h_2,\cdots,h_d})$. For any $\sigma \in \text{Sym}(d)$, there holds
$$
v_{h_{\sigma(1)},h_{\sigma(2)},\cdots, h_{\sigma(d)}}=G(V^{h_{\sigma(1)},h_{\sigma(2)},\cdots, h_{\sigma(d)}}).
$$
Invoking \eqref{eq:F-def} we obtain
\begin{equation*}
\mathcal{T}_{\sigma}(V^{h_1,h_2,\cdots,h_d})=V^{h_{\sigma(1)},h_{\sigma(2)},\cdots,h_{\sigma(d)}},\quad \forall V^{h_1,h_2,\cdots,h_d}\in  \mathbb{C}^{N_{dof}}.
\end{equation*}
The process of obtaining $V^{h_{\sigma(1)},h_{\sigma(2)},\cdots, h_{\sigma(d)}}$ from $V^{h_1,h_2,\cdots,h_d}$ can be implemented by a symmetrization algorithm (Algorithm \ref{alg:symmetric-dd}). Define a bijection $R_{\sigma}: S^{h_1,h_2,\cdots,h_d}(\Omega)\rightarrow S^{h_{\sigma(1)},h_{\sigma(2)},\cdots,h_{\sigma(d)}}(\Omega)$ satisfying
\begin{equation}\label{eq:R-def-d}
R_{\sigma}=G\circ \mathcal{T}_{\sigma} \circ G^{-1}.
\end{equation}
We have
\begin{equation}\label{eq:trans-intro}
v_{h_{\sigma(1)},h_{\sigma(2)},\cdots,h_{\sigma(d)}} =R_{\sigma}(v_{h_1,h_2,\cdots,h_d}),\quad \forall v_{h_1,h_2,\cdots,h_d}\in S^{h_1,h_2,\cdots,h_d}(\Omega).
\end{equation}

It is seen from \eqref{eq:R-def-d} and \eqref{eq:trans-intro} that the critical process of getting $R_{\sigma}(v_{h_1,h_2,\cdots,h_d})$ from $v_{h_1,h_2,\cdots,h_d}$ is to calculate $\mathcal{T}_{\sigma}(V^{h_1,h_2,\cdots,h_d})$ from $V^{h_1,h_2,\cdots,h_d}$ by Algorithm \ref{alg:symmetric-dd}, which requires $2d^2 N_{dof}$ operations with $N_{dof}:=\prod_{i=1}^d (N_i+1)$. While computing $v_{h_{\sigma(1)},h_{\sigma(2)},\cdots,h_{\sigma(d)}}$ by the standard finite element method usually demands $M N_{dof}$ operations with an amount $M \gg 2d^2$, due to creating stiffness matrix and solving linear algebraic systems. Consequently, if $v_{h_1,h_2,\cdots,h_d}$ has been calculated, then obtaining $v_{h_{\sigma(1)},h_{\sigma(2)},\cdots,h_{\sigma(d)}}$ through Algorithm \ref{alg:symmetric-dd} is much more efficient than the standard finite element method. From this observation, some efficient algorithms for \eqref{equ:weak_form_boundary_value_pro} and \eqref{equ:weak_eigenvalue_pro} with symmetric solutions can be established.

\subsection{The source problem}\label{sec:bvp}

For the source problem \eqref{equ:weak_form_boundary_value_pro} with symmetric solution, we propose a symmetrized two-scale finite element method to reduce computational cost by combining Algorithm \ref{alg:symmetric-dd} with the two-scale finite element method in \cite{GLZ08,LZ06}. Let $h,H\in(0,1)$ and assume that $H/h$ is a positive integer. Let $w_{h\alpha+H(\textbf{e}-\alpha)} \in S_0^{h\alpha+H(\textbf{e}-\alpha)}(\Omega)(\textbf{0}\le\alpha\le\textbf{e})$, and define
\begin{equation*}
    B^h_{H\textbf{e}}w_{h\textbf{e}}=\sum_{i=1}^d w_{H\hat{\textbf{e}}_i+h\textbf{e}_i}-(d-1)w_{H\textbf{e}}.
\end{equation*}
For instance, $B^h_{H,H,H}w_{h,h,h}=w_{h,H,H}+w_{H,h,H}+w_{H,H,h}-2w_{H,H,H}$, for $d=3$. Invoking \eqref{eq:trans-intro}, a symmetrized two-scale finite element method for the linear source problem \eqref{equ:weak_form_boundary_value_pro} with symmetric solution is described in Algorithm \ref{alg:bvp_ts}.

\begin{algorithm}\label{alg:bvp_ts}
    % \caption{} %{Symmetrized two-scale finite element method}
    \begin{enumerate}
        \item Solve (\ref{equ:boundary_FEM}) on a coarse grid: find $P_{H\textbf{e}}u\in S_0^{      H\textbf{e}}(\Omega)$\ such that
        \begin{equation*}
            a(P_{H\textbf{e}}u,v)=(f,v)\ \ \ \ \forall v\in S_0^{H\textbf{e}}(\Omega).
        \end{equation*}
        \item Solve (\ref{equ:boundary_FEM}) on a grid which is fine in the first coordinate direction: find $P_{h\textbf{e}_1+H\hat{\textbf{e}}_1}u\in S_0^{h\textbf{e}_1+H\hat{\textbf{e}}_1}(\Omega)$\ such that
        \begin{align*}
            a(P_{h\textbf{e}_1+H\hat{\textbf{e}}_1}u,v)=(f,v)\ \ \ \ \forall v\in S_0^{h\textbf{e}_1+H\hat{\textbf{e}}_1}(\Omega) .
        \end{align*}

        Get $R_{(1,i)}(P_{h\textbf{e}_1+H\hat{\textbf{e}}_1}u) \in S_0^{h\textbf{e}_i+H\hat{\textbf{e}}_i}(\Omega), \ i\in\mathbb{Z}_d/\{1\}$ from $P_{h\textbf{e}_1+H\hat{\textbf{e}}_1}u$ through Algorithm \ref{alg:symmetric-dd}.

        \item Set
        \begin{align*}
            B^h_{H\textbf{e}}P_{h\textbf{e}}u=P_{h\textbf{e}_1+H\hat{\textbf{e}}_1}u+\sum_{i=2}^d R_{(1,i)}(P_{h\textbf{e}_1+H\hat{\textbf{e}}_1}u)-(d-1)P_{H\textbf{e}}u.
        \end{align*}
    \end{enumerate}
\end{algorithm}

For $d=2$ or $3$, there holds the following error estimates for the symmetrized two-scale finite element solution.
%which is a refinement of the results in \cite{GLZ08,LZ06,LSZ11}.

\begin{thm}\label{thm:bvp_ts}
    Assume that $\partial_{x_l} a_{ij}\in W^{1,\infty}(\Omega)$ and $\partial_{x_l} b_l\in L^{\infty}(\Omega)$ for $i,j,l\in \mathbb{Z}_d$. If $u\in H_0^1(\Omega)\cap W^{G,4}(\Omega)$ and $B^h_{H\textbf{e}}P_{h\textbf{e}}u$ is obtained from Algorithm \ref{alg:bvp_ts}, then
    \begin{align*}
        \lVert u-B^h_{H\textbf{e}}P_{h\textbf{e}}u \rVert_{1,\Omega}&\lesssim (h+H^3)\lVert u\rVert_{W^{G,4}(\Omega)},\\
        \lVert u-B^h_{H\textbf{e}}P_{h\textbf{e}}u \rVert_{0,\Omega}&\lesssim (h^2+H^4)\lVert u\rVert_{W^{G,4}(\Omega)}.
    \end{align*}
\end{thm}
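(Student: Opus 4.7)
The plan is to reduce the output of Algorithm \ref{alg:bvp_ts} to the classical two-scale finite element approximation of \cite{GLZ08, LZ06} by means of a symmetry identity, and then invoke the a priori estimates that are already available for the latter. The whole theorem then becomes essentially free of new analytical content: the novelty lies in step 3 of Algorithm \ref{alg:bvp_ts}, where the expensive Galerkin projections on each ``fine-in-one-direction'' grid are replaced, for $i\ne 1$, by applications of the cheap vector permutation $R_{(1,i)}$.

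The key step is to establish the symmetry identity
\begin{equation*}
R_{(1,i)}\bigl(P_{h\mathbf{e}_1+H\hat{\mathbf{e}}_1} u\bigr) = P_{h\mathbf{e}_i+H\hat{\mathbf{e}}_i} u, \qquad i=2,\dots,d.
\end{equation*}
Reading off the nodal formulas \eqref{eq:trans-2d2} and \eqref{eq:trans-3d2}, one sees that $R_{(1,i)}$ acts on finite element functions simply by swapping the $x_1$ and $x_i$ coordinates, i.e.\ $(R_{(1,i)} w)(\mathbf{x}) = w(x_{(1,i)(1)},\dots,x_{(1,i)(d)})$, and that it restricts to a bijection of $S_0^{h\mathbf{e}_1+H\hat{\mathbf{e}}_1}(\Omega)$ onto $S_0^{h\mathbf{e}_i+H\hat{\mathbf{e}}_i}(\Omega)$. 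Performing the change of variables $\mathbf{x}\mapsto(x_{(1,i)(1)},\dots,x_{(1,i)(d)})$ in the discrete equation satisfied by $P_{h\mathbf{e}_1+H\hat{\mathbf{e}}_1} u$, and using the symmetry of $u$ together with the invariance of $a_{ij}$, $b_i$, $V$, and $f$ under the transposition $(1,i)$ that is implicit in having a symmetric solution, one shows that $R_{(1,i)}(P_{h\mathbf{e}_1+H\hat{\mathbf{e}}_1} u)$ satisfies the Galerkin equation for \eqref{equ:weak_form_boundary_value_pro} on $S_0^{h\mathbf{e}_i+H\hat{\mathbf{e}}_i}(\Omega)$; uniqueness then forces the claimed identity. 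Substituting it into the definition of $B^h_{H\mathbf{e}} P_{h\mathbf{e}} u$ gives
\begin{equation*}
B^h_{H\mathbf{e}} P_{h\mathbf{e}} u = \sum_{i=1}^d P_{h\mathbf{e}_i+H\hat{\mathbf{e}}_i} u - (d-1) P_{H\mathbf{e}} u,
\end{equation*}
which is exactly the standard two-scale finite element approximation of $u$.

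With the reduction in hand, the stated $H^1$- and $L^2$-bounds follow directly from the a priori error estimates for the two-scale method proved in \cite{GLZ08, LZ06} under the regularity $u\in W^{G,4}(\Omega)$ and the smoothness hypotheses on the coefficients $a_{ij}$ and $b_i$ listed in the theorem. The only real obstacle is the symmetry identity itself: one must carefully (i) pin down the invariance of the problem data that is forced by symmetry of the solution, (ii) verify that the permuted trial and test spaces, together with the Dirichlet boundary conditions, are exchanged correctly by $R_{(1,i)}$ so the Galerkin framework transports faithfully, and (iii) use uniqueness of the finite element solution on each grid to close the argument. Once these three points are checked, the theorem reduces to citing existing work.
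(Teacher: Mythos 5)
Your proposal is correct and follows essentially the same route as the paper: the symmetry identity $R_{(1,i)}(P_{h\textbf{e}_1+H\hat{\textbf{e}}_1}u)=P_{h\textbf{e}_i+H\hat{\textbf{e}}_i}u$ that you set out to prove is exactly the paper's \eqref{eq:trans-intro} (which the paper asserts from its symmetrization framework rather than proving via your change-of-variables and uniqueness argument), and once $B^h_{H\textbf{e}}P_{h\textbf{e}}u$ is identified with the classical two-scale combination, the paper, like you, concludes by quoting existing two-scale estimates --- Theorem 3.3 and Lemma 4.1 of \cite{LZ16} to get $\lVert P_{h\textbf{e}}u-B^h_{H\textbf{e}}P_{h\textbf{e}}u \rVert_{1,\Omega}\lesssim (h^2+H^3)\lVert u\rVert_{W^{G,4}(\Omega)}$ followed by the argument of Theorem 2.5 of \cite{LSZ11}, where you instead cite \cite{GLZ08,LZ06}. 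One imprecision worth fixing: the invariance of $a_{ij}$, $b_i$, $V$, and $f$ under transpositions is not ``forced'' by the symmetry of $u$; it is an implicit standing hypothesis of the symmetrized framework (it is precisely what makes \eqref{eq:trans-intro} hold for Galerkin projections), so it should be stated as an assumption rather than derived from symmetry of the solution.
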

\begin{proof}
Invoking Theorem 3.3 and Lemma 4.1 in \cite{LZ16}, we have
\begin{equation}\label{eq:thm-bvp_ts-1}
\lVert P_{h\textbf{e}}u-B^h_{H\textbf{e}}P_{h\textbf{e}}u \rVert_{1,\Omega}\lesssim (h^2+H^3)\lVert u\rVert_{W^{G,4}(\Omega)}.
\end{equation}
Then imitating the proof of Theorem 2.5 in \cite{LSZ11}, we complete the proof.
\end{proof}

\begin{remark}
We may see from Algorithm \ref{alg:bvp_ts} that the symmetrized two-scale finite element method is quite applicable for the source problem \eqref{equ:boundary_value_pro} with symmetric solution. $R_{(1,i)}(P_{h\textbf{e}_1+H\hat{\textbf{e}}_1}u)$ can be obtained from $P_{h\textbf{e}_1+H\hat{\textbf{e}}_1}u$ through Algorithm \ref{alg:symmetric-dd}. The computational cost of Algorithm \ref{alg:symmetric-dd} is far smaller than computing $P_{h\textbf{e}_i+H\hat{\textbf{e}}_i}u$ by the standard finite element method. For example, when $d=3$, the amount of operations for Algorithm \ref{alg:symmetric-dd} to get $P_{H,h,H}u=R_{(1,2)}(P_{h,H,H}u)$ from $P_{h,H,H}u$ is only $18 N_{dof}$ with $N_{dof}:=(n-1)(N-1)(N-1)$. While computing $P_{H,h,H}u$ by the standard finite element method usually requires $M N_{dof}$ operations with $M\gg 18$, due to creating stiffness matrix and solving linear algebraic systems. Hence the computational cost is reduced significantly.
\end{remark}

\begin{remark}
Combining the two-scale finite element method with the postprocessing technique, the postprocessed two-scale finite element method and the two-scale postprocessed finite element method have been proposed in \cite{LSZ11,LZ16}, in which the interpolation postprocessing operator $\Pi_{2\textbf{h}}$ for uniform tensor product grids $T^{\bf h}(\Omega)$ is used. For the linear source problem \eqref{equ:boundary_value_pro} with symmetric solution, these methods can also be combined with Algorithm \ref{alg:symmetric-dd} similarly to reduce computational cost further.
\end{remark}

%{The eigenvalue problem}\label{sec:eig}
\subsection{The eigenvalue problem}\label{sec:eig}

Similar to Section \ref{sec:bvp}, for the eigenvalue problem \eqref{equ:weak_eigenvalue_pro} with symmetric solution, we combine Algorithm \ref{alg:symmetric-dd} with the two-scale finite element method in \cite{GLZ08,LZ06} to propose the symmetrized two-scale finite element method (Algorithm \ref{alg:eig_ts}).

\begin{algorithm}\label{alg:eig_ts}
    % \caption{}  %{Symmetrized two-scale finite element method}
    \begin{enumerate}
        \item Solve (\ref{equ:eigenvalue_FEM}) on a coarse grid: find $(\lambda_{H\textbf{e}},u_{H\textbf{e}})\in \mathbb{R}\times S_0^{H\textbf{e}}(\Omega)$\ satisfying $\lVert u_{H\textbf{e}} \rVert_{0,\Omega}=1$ and
        \begin{equation*}
            a(u_{H\textbf{e}},v)=\lambda_{H\textbf{e}}(u_{H\textbf{e}},v)\ \ \ \ \forall v\in S_0^{H\textbf{e}}(\Omega).
        \end{equation*}

        \item Solve (\ref{equ:eigenvalue_FEM}) on a grid which is fine in the first coordinate direction:

        find $(\lambda_{h\textbf{e}_1+H\hat{\textbf{e}}_1}, u_{h\textbf{e}_1+H\hat{\textbf{e}}_1})\in \mathbb{R}\times S_0^{h\textbf{e}_1+H\hat{\textbf{e}}_1}(\Omega)$\ satisfying $\lVert u_{h\textbf{e}_1+H\hat{\textbf{e}}_1} \rVert_{0,\Omega}=1$ and
        \begin{align*}
            a(u_{h\textbf{e}_1+H\hat{\textbf{e}}_1},v)=\lambda_{h\textbf{e}_1+H\hat{\textbf{e}}_1}(u_{h\textbf{e}_1+H\hat{\textbf{e}}_1},v)\ \ \ \ \forall v\in S_0^{h\textbf{e}_1+H\hat{\textbf{e}}_1}(\Omega).
        \end{align*}
        For $i\in\mathbb{Z}_d/\{1\}$, set $\lambda_{h\textbf{e}_i+H\hat{\textbf{e}}_i}=\lambda_{h\textbf{e}_1+H\hat{\textbf{e}}_1}$ and obtain $ R_{(1,i)}(u_{h\textbf{e}_1+H\hat{\textbf{e}}_1})\in S_0^{h\textbf{e}_i+H\hat{\textbf{e}}_i}(\Omega)$ from  $u_{h\textbf{e}_1+H\hat{\textbf{e}}_1}\in S_0^{h\textbf{e}_1+H\hat{\textbf{e}}_1}(\Omega)$ through Algorithm \ref{alg:symmetric-dd}.

        \item Set
        \begin{align*}
            B^h_{H\textbf{e}}\lambda_{h\textbf{e}}&=\sum_{i=1}^d \lambda_{h\textbf{e}_i+H\hat{\textbf{e}}_i}-(d-1)\lambda_{H\textbf{e}},\\
            B^h_{H\textbf{e}}u_{h\textbf{e}}&=u_{h\textbf{e}_1+H\hat{\textbf{e}}_1}+\sum_{i=2}^d R_{(1,i)}(u_{h\textbf{e}_1+H\hat{\textbf{e}}_1})-(d-1)u_{H\textbf{e}}.
        \end{align*}
    \end{enumerate}
\end{algorithm}

From \eqref{eq:trans-intro} we have $u_{h\textbf{e}_i+H\hat{\textbf{e}}_i}=R_{(1,i)}(u_{h\textbf{e}_1+H\hat{\textbf{e}}_1})$. For $d=2$ or $3$, there holds the following result, which is a refinement of the results in \cite{GLZ08,LZ06}.

\begin{thm}\label{thm:eig_ts}
   Let $(\lambda_{H\textbf{e}},u_{H\textbf{e}})$ and $(\lambda_{h\textbf{e}_1+H\hat{\textbf{e}}_1}, u_{h\textbf{e}_1+H\hat{\textbf{e}}_1})$ be eigenpairs of \eqref{equ:eigenvalue_FEM} that approximate the same exact eigenpair $(\lambda,u)$. If $\partial_{x_l} a_{ij}\in W^{1,\infty}(\Omega)$ for $i,j,l\in \mathbb{Z}_d$ and $u\in H_0^1(\Omega)\cap W^{G,4}(\Omega)$, then
   \begin{align}
       \lVert u-B^h_{H\textbf{e}}u_{h\textbf{e}} \rVert_{1,\Omega} &\lesssim h+H^3, \label{thm:eig_ts-1}\\
       \lvert \lambda-B^h_{H\textbf{e}}\lambda_{h\textbf{e}} \rvert &\lesssim h^2+H^4. \label{thm:eig_ts-2}
   \end{align}
\end{thm}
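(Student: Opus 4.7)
The plan is to exploit the identity $u_{h\textbf{e}_i+H\hat{\textbf{e}}_i}=R_{(1,i)}(u_{h\textbf{e}_1+H\hat{\textbf{e}}_1})$ from \eqref{eq:trans-intro}, which allows me to rewrite the symmetrized combination as a standard two-scale combination:
$$
B^h_{H\textbf{e}}u_{h\textbf{e}}=\sum_{i=1}^d u_{h\textbf{e}_i+H\hat{\textbf{e}}_i}-(d-1)u_{H\textbf{e}}, \qquad B^h_{H\textbf{e}}\lambda_{h\textbf{e}}=\sum_{i=1}^d \lambda_{h\textbf{e}_i+H\hat{\textbf{e}}_i}-(d-1)\lambda_{H\textbf{e}},
$$
so the entire analysis reduces to estimating the standard two-scale combination of eigenpairs on the anisotropic grids $h\textbf{e}_i+H\hat{\textbf{e}}_i$ together with the coarse grid $H\textbf{e}$, for which the source-problem result in Theorem \ref{thm:bvp_ts} and the eigenvalue identity in Lemma \ref{lemma3} are available.

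For the eigenfunction estimate \eqref{thm:eig_ts-1} I would insert the Galerkin projections and split
$$
u-B^h_{H\textbf{e}}u_{h\textbf{e}}=(u-B^h_{H\textbf{e}}P_{h\textbf{e}}u)+\sum_{i=1}^d(P_{h\textbf{e}_i+H\hat{\textbf{e}}_i}u-u_{h\textbf{e}_i+H\hat{\textbf{e}}_i})-(d-1)(P_{H\textbf{e}}u-u_{H\textbf{e}}).
$$
Theorem \ref{thm:bvp_ts} bounds the first term by $h+H^3$ (after observing that $B^h_{H\textbf{e}}P_{h\textbf{e}}u$ is the symmetrized Galerkin-projection combination built from the same $R_{(1,i)}$). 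Each remaining difference is between the discrete eigenvector $u_{\bf h}$ and the Galerkin projection $P_{\bf h}u$ of the exact eigenvector; subtracting \eqref{equ:eigenvalue_FEM} from \eqref{Gprojection} gives $a(u_{\bf h}-P_{\bf h}u,v)=\lambda_{\bf h}(u_{\bf h}-u,v)+(\lambda_{\bf h}-\lambda)(u,v)$, which together with \eqref{eigen-error-estimate} implies $\|u_{\bf h}-P_{\bf h}u\|_{1,\Omega}\lesssim h^2$, an order higher than the target $h+H^3$.

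For the eigenvalue estimate \eqref{thm:eig_ts-2} I would apply Lemma \ref{lemma3} on each of the grids $h\textbf{e}_i+H\hat{\textbf{e}}_i$ ($i\in\mathbb{Z}_d$) and on $H\textbf{e}$, and combine linearly. The $\mathcal{O}(h^4)$ and $\mathcal{O}(H^4)$ remainders are absorbed directly, while the main terms collapse via linearity of the inner product into
$$
B^h_{H\textbf{e}}\lambda_{h\textbf{e}}-\lambda=\lambda\,(u,\,u-B^h_{H\textbf{e}}P_{h\textbf{e}}u)+\mathcal{O}(h^4+H^4).
$$
Applying Cauchy--Schwarz together with the $L^2$ bound $\|u-B^h_{H\textbf{e}}P_{h\textbf{e}}u\|_{0,\Omega}\lesssim h^2+H^4$ from Theorem \ref{thm:bvp_ts} and $\|u\|_{0,\Omega}=1$ gives \eqref{thm:eig_ts-2}.

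The main obstacle I anticipate is justifying the higher-order bound $\|u_{\bf h}-P_{\bf h}u\|_{1,\Omega}\lesssim h^2$ uniformly on the three anisotropic mixed-scale grids ${\bf h}=h\textbf{e}_i+H\hat{\textbf{e}}_i$ rather than on quasi-uniform grids; this requires the $L^2$ eigenvector estimate \eqref{eigen-error-estimate} to hold on each such grid (with $h$ replaced by $\max_j h_j$) and requires the hypothesis that the eigenpair $(\lambda,u)$ remains simple and well-separated, so the standard duality and spectral perturbation arguments apply. Once these ingredients are in place, the rest is bookkeeping via the triangle inequality and linearity of $B^h_{H\textbf{e}}$.
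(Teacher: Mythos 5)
Your treatment of the eigenvalue estimate \eqref{thm:eig_ts-2} is sound and is essentially the paper's own argument: apply Lemma \ref{lemma3} on each grid, collapse the main terms by linearity into $\lambda(u,\,u-B^h_{H\textbf{e}}P_{h\textbf{e}}u)$, and finish with Cauchy--Schwarz and the $L^2$ bound of Theorem \ref{thm:bvp_ts}. (The paper routes the identity through $\lambda_{h\textbf{e}}$, writing $B^h_{H{\bf e}}\lambda_{h{\bf e}}-\lambda_{h{\bf e}}=\lambda(u,P_{h{\bf e}}u-B^h_{H{\bf e}}P_{h{\bf e}}u)+\mathcal{O}(H^{4})$, but the content is the same.)

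The eigenfunction estimate \eqref{thm:eig_ts-1}, however, has a genuine gap. In your decomposition the terms $P_{\bf h}u-u_{\bf h}$ must be bounded on the coarse grid ${\bf h}=H\textbf{e}$ and on the mixed grids ${\bf h}=h\textbf{e}_i+H\hat{\textbf{e}}_i$, and on all of these the governing mesh parameter in \eqref{eigen-error-estimate} is $\max_j h_j=H$, not $h$. Your identity $a(u_{\bf h}-P_{\bf h}u,v)=\lambda_{\bf h}(u_{\bf h}-u,v)+(\lambda_{\bf h}-\lambda)(u,v)$ therefore yields only $\lVert u_{\bf h}-P_{\bf h}u\rVert_{1,\Omega}\lesssim H^2$ on these grids; the claim that this quantity is ``$\lesssim h^2$, an order higher than the target'' is false there. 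You half-notice this in your final paragraph when you say $h$ must be replaced by $\max_j h_j$, but you do not register that the substitution is fatal: summing $2d-1$ terms of size $H^2$ gives $\lVert u-B^h_{H\textbf{e}}u_{h\textbf{e}}\rVert_{1,\Omega}\lesssim h+H^2$, strictly weaker than the asserted $h+H^3$. To recover the $H^3$ rate one must exploit cancellation among the differences $u_{{\bf h}_i}-P_{{\bf h}_i}u$ taken across the different grids (their $\mathcal{O}(H^2)$ leading parts combine to higher order), and establishing that cross-grid cancellation is precisely the nontrivial ingredient the paper imports from elsewhere: it invokes Lemma 5.2 together with Theorem 3.3 of \cite{LZ16} to obtain $\lVert B^h_{H\textbf{e}}u_{h\textbf{e}}-u_{h\textbf{e}}\rVert_{1,\Omega}\lesssim h^2+H^3$, and then concludes by the triangle inequality using $\lVert u-u_{h\textbf{e}}\rVert_{1,\Omega}\lesssim h$ from \eqref{eigen-error-estimate}. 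Bounding each grid's eigenvector-versus-projection gap separately, as you propose, cannot reach the stated rate.
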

\begin{proof}
Combining Lemma 5.2 with Theorem 3.3 in \cite{LZ16}, we obtain
\begin{equation}\label{eq:eig_ts-1}
\lVert B^h_{H\textbf{e}}u_{h\textbf{e}} - u_{h\textbf{e}}\rVert_{1,\Omega} \lesssim h^2+H^3.
\end{equation}
Thus we arrive at \eqref{thm:eig_ts-1}. By Lemma \ref{lemma3}, we have
\begin{eqnarray*}
& &B^h_{H{\bf e}}\lambda_{h{\bf e}}-\lambda_{h{\bf e}}=\lambda(u,
P_{h{\bf e}}u-B^h_{H{\bf e}}P_{h{\bf e}}u)+\mathcal{O}(H^{4}).
\end{eqnarray*}
Applying Theorem \ref{thm:bvp_ts}, we complete the proof.
\end{proof}

\begin{remark}
For the eigenvalue problem with symmetric eigenfunction, Algorithm \ref{alg:symmetric-dd} is combined with the two-scale finite element method to obtain new and more efficient algorithm (Algorithm \ref{alg:eig_ts}). Algorithm \ref{alg:symmetric-dd} can also be combined with the postprocessed two-scale finite element method in \cite{LSZ11} and the two-scale postprocessed finite element method in \cite{LZ16}. Besides, some two-scale finite element discretizations for the nonlinear eigenvalue problems have been proposed and analyzed in \cite{hou2021}. Consequently, for the nonlinear eigenvalue problems with symmetric eigenfunctions, the two-scale finite element methods in \cite{hou2021} can also be combined with Algorithm \ref{alg:symmetric-dd} to get new algorithms.
\end{remark}

\section{Numerical examples}\label{sec:numer-ex}
In this section, several numerical examples, including the electronic structure calculations, are presented to illustrate the efficiency of our approaches. To optimize the cost of the computation, we choose $h=H^2$ approximately.

\textbf{Example 1} Consider a source problem with a singular coefficient:
\begin{eqnarray*}
\vspace{-0.8cm}
    \left\{\begin{aligned}
     -\Delta u - \frac{1}{\sqrt{x_1^2+x_2^2+x_3^2}}u + x_1x_2x_3u &= f \ \ in\ \ \Omega=(0,1)^3,\\
     u &= 0 \ \ on\ \  \partial\Omega
    \end{aligned}
    \right.
    \vspace{-0.8cm}
\end{eqnarray*}
with $f$ chosen so that the exact solution is
$$u=2x_1x_2x_3(1-x_1)(1-x_2)(1-x_3)\text{e}^{x_1+x_2+x_3}.$$ Let $L=1.0$ be the side length of $\Omega$.

For this linear source problem with symmetric solution, the numerical results are presented in Figures \ref{fig:ex1-1}-\ref{fig:ex1-7} and Tables \ref{ex1-3}-\ref{ex1-4}. Figure \ref{fig:ex1-1} supports the convergence results of Theorem \ref{thm:bvp_ts}. Time consumptions of different methods are presented in Figure \ref{fig:ex1-7}. It is shown that the two-scale finite element method is more efficient than the standard finite element method. Algorithm \ref{alg:bvp_ts} reduces the time consumption further compared with the two-scale finite element method, which is illustrated in more details in Tables \ref{ex1-3}-\ref{ex1-4}. All of these results show that for this linear source problem with symmetric solution, considering both accuracy and computational cost, the symmetrized two-scale finite element method (Algorithm \ref{alg:bvp_ts}) is the preferred method.

\begin{figure}[H]
\vspace{-0.2cm}
\begin{minipage}[t]{0.5\linewidth}
\centering
\includegraphics[width=8cm]{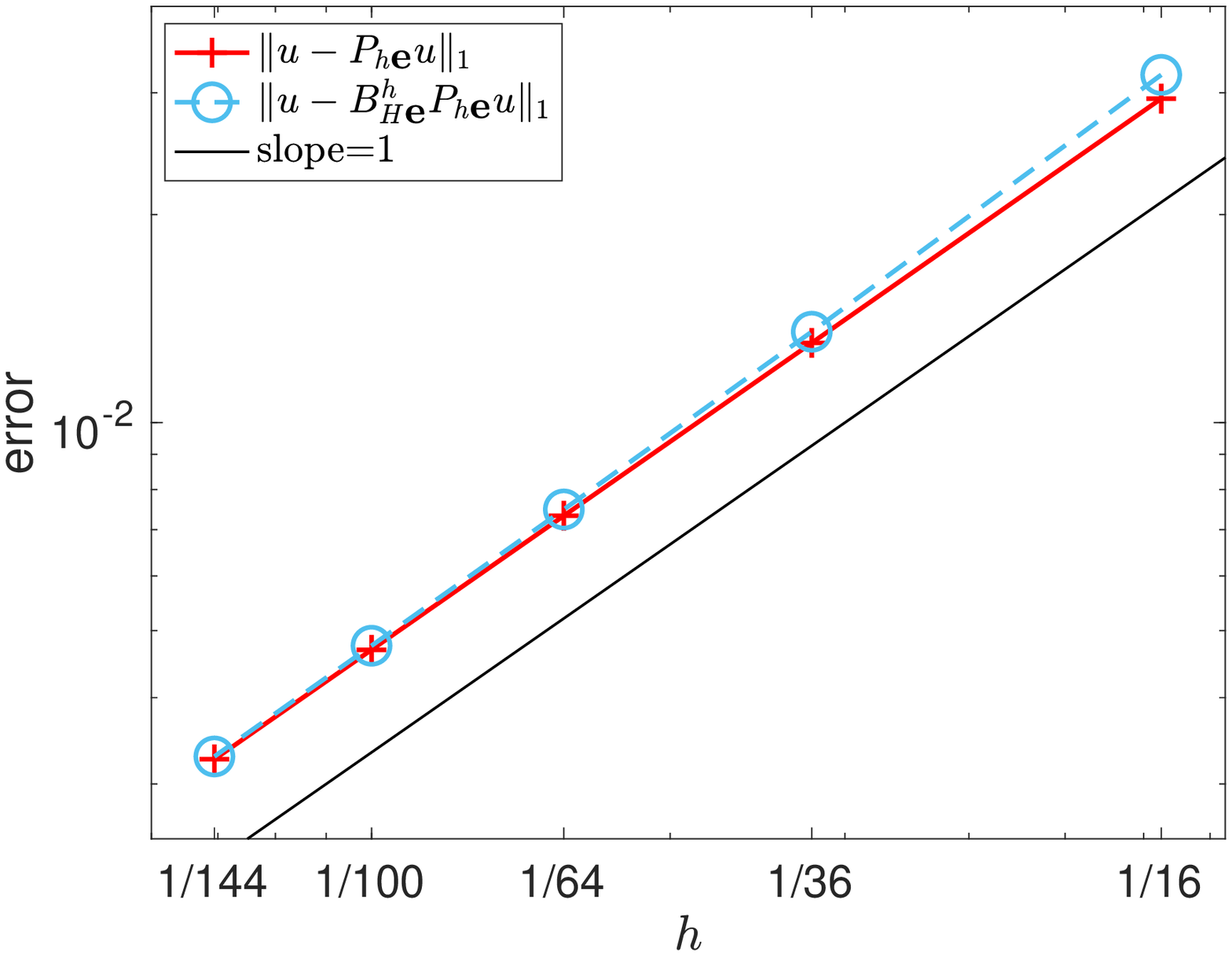}
\end{minipage}
\hfill
\begin{minipage}[t]{0.5\linewidth}
\centering
\includegraphics[width=8cm]{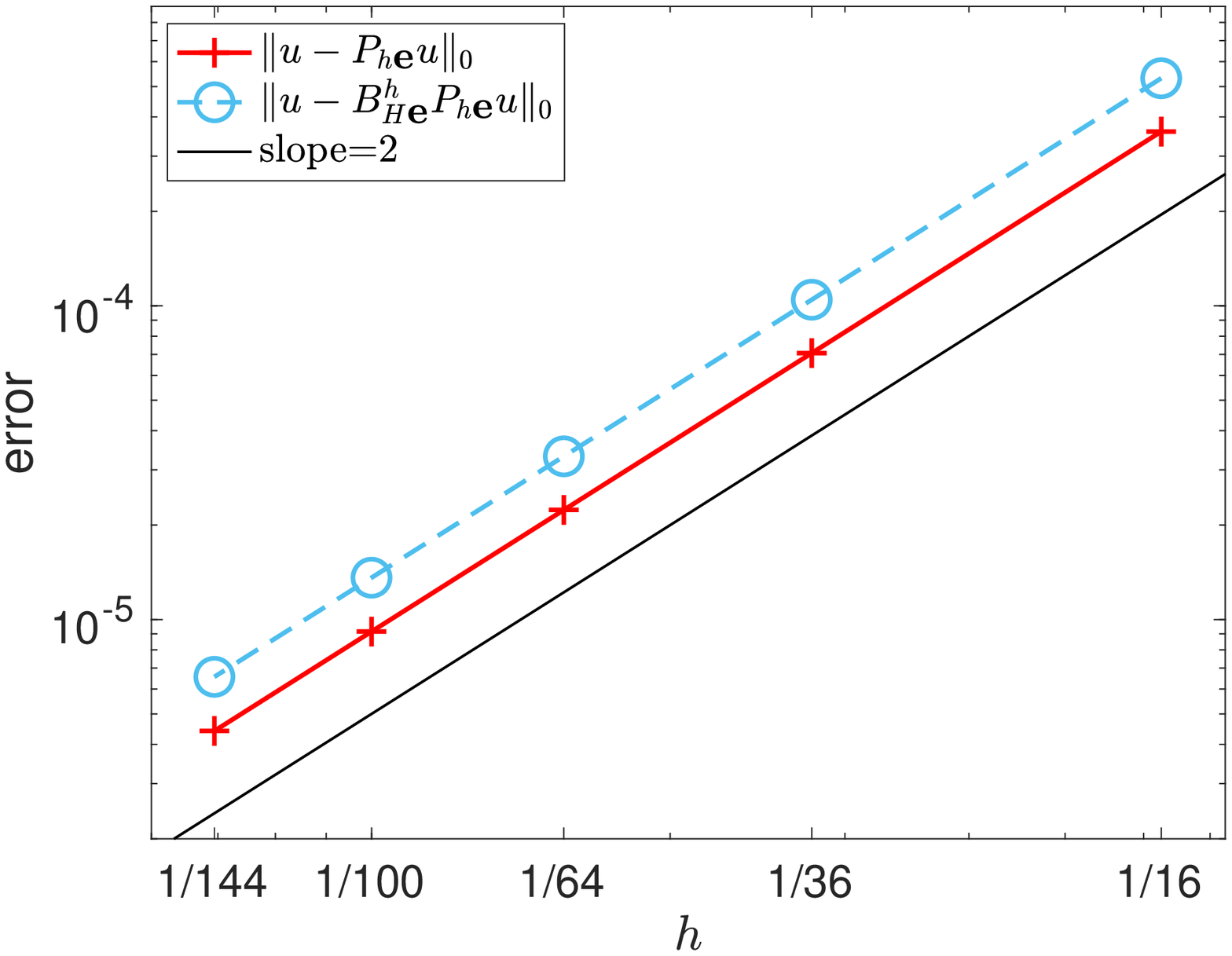}
\end{minipage}
\caption{\footnotesize(Example 1) The convergence curves of solutions in $H^1$ norm and $L^2$ norm by the standard FEM and Algorithm {\ref{alg:bvp_ts}}.}\label{fig:ex1-1}
\end{figure}

\begin{figure}[H]
\vspace{-0.2cm}
\centering
\includegraphics[width=8cm]{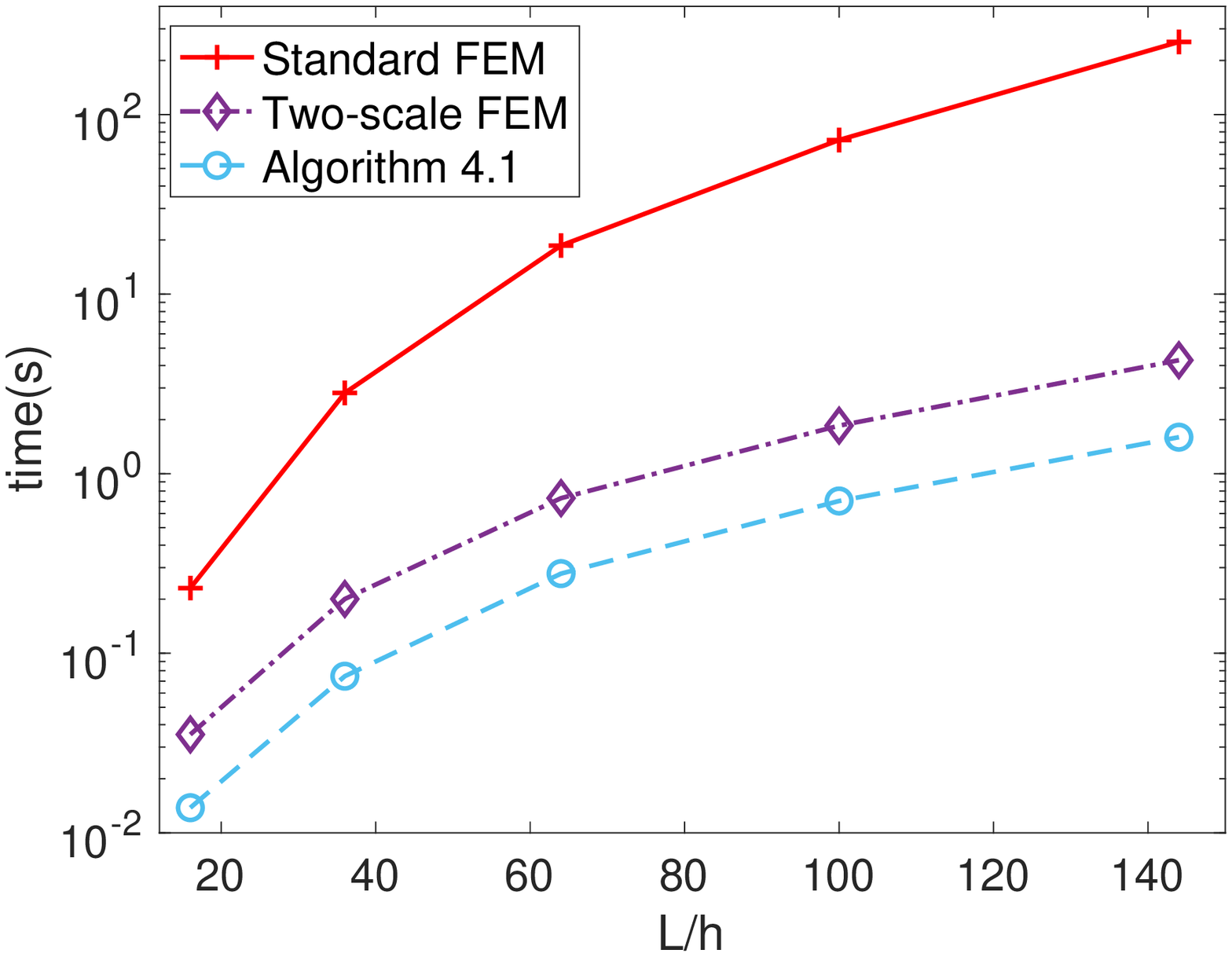}
\caption{\footnotesize(Example 1) The time consumptions of the three methods.}\label{fig:ex1-7}
\end{figure}

\begin{table}[H]
\caption{\footnotesize Example 1: The running time (s) by the two-scale FEM.}\label{ex1-3}
  \centering
    \begin{tabular}{|c|c|c|c|c|c|c|}
        \hline
         $\frac{L}{h}\times \frac{L}{H}\times \frac{L}{H}$ & $P_{H\textbf{e}}u$ & $P_{h\textbf{e}_1+H\hat{\textbf{e}}_1}u$ & $P_{h\textbf{e}_2+H\hat{\textbf{e}}_2}u$ & $P_{h\textbf{e}_3+H\hat{\textbf{e}}_3}u$ & \begin{minipage}{1.6cm}\vspace{0mm} \center $B_{H\textbf{e}}^hP_{h\textbf{e}}u$ \vspace{-2mm} \end{minipage}  & Total \\
        \hline
        $16\times4\times4$    & 0.0030 & 0.0104 & 0.0110 & 0.0104 & 0.0006 & 0.0354 \\
        \hline
        $36\times6\times6$   & 0.0106 & 0.0609 & 0.0624 & 0.0614 &  0.0052 & 0.2005 \\
        \hline
        $64\times8\times8$   & 0.0364 & 0.2250 & 0.2220 & 0.2192 &  0.0281 & 0.7307 \\
        \hline
        $100\times10\times10$ & 0.0754 & 0.5574 & 0.5484 & 0.5596 &  0.1104 & 1.8512 \\
        \hline
        $144\times12\times12$ & 0.1005 & 1.2790 & 1.2835 & 1.2941 &  0.3264 & 4.2835 \\
        \hline
    \end{tabular}
\end{table}

\begin{table}[H]
\caption{\footnotesize Example 1: The running time (s) by Algorithm \ref{alg:bvp_ts}.}\label{ex1-4}
  \centering
    \begin{tabular}{|c|c|c|c|c|c|}
        \hline
          $\frac{L}{h}\times \frac{L}{H}\times \frac{L}{H}$ & $P_{H\textbf{e}}u$ & $P_{h\textbf{e}_1+H\hat{\textbf{e}}_1}u$ & $P_{h\textbf{e}_2+H\hat{\textbf{e}}_2}u, P_{h\textbf{e}_3+H\hat{\textbf{e}}_3}u$ & \begin{minipage}{1.6cm}\vspace{0mm} \center $B_{H\textbf{e}}^hP_{h\textbf{e}}u$ \vspace{-2mm} \end{minipage} & Total \\
        \hline
        $16\times4\times4$    & 0.0030 & 0.0104 & 0.0001 & 0.0003 & 0.0138 \\
        \hline
        $36\times6\times6$   & 0.0106 & 0.0609 & 0.0004 &  0.0026 & 0.0745 \\
        \hline
        $64\times8\times8$   & 0.0364 & 0.2250 & 0.0027 &  0.0135 & 0.2776 \\
        \hline
        $100\times10\times10$ & 0.0754 & 0.5574 & 0.0136 &  0.0581 & 0.7045 \\
        \hline
        $144\times12\times12$ & 0.1005 & 1.2790 & 0.0461 &  0.1708 & 1.5964 \\
        \hline
    \end{tabular}
\end{table}

\textbf{Example 2} Consider a source problem:
\begin{eqnarray*}
    \left\{\begin{aligned}
     -\sum\limits_{i,j=1}^3\frac{\partial}{\partial x_j}(a_{ij}\frac{\partial u}{\partial x_i})+\sum\limits_{i=1}^3 b_i\frac{\partial u}{\partial x_i}+u &= f \ \ in\ \ \Omega=(1,2)^3,\\
      u &= 0 \ \ on\ \   \partial\Omega,
    \end{aligned}\right.
\end{eqnarray*}
where
\begin{equation*}
    A=(a_{ij})=
    \begin{pmatrix}
          \text{e}^{x_1} & 1           & 1 \\
          1          & \text{e}^{x_2}  & 1 \\
          1          & 1           & \text{e}^{x_3}
    \end{pmatrix},
    b_1 = b_2 = b_3 =0.001,
\end{equation*}
and $f$ is chosen so that we have the symmetric exact solution as follows
\begin{equation*}
    u=(1-x_1)(2-x_1)(1-x_2)(2-x_2)(1-x_3)(2-x_3)(\sin\sqrt{x_1x_2x_3})\text{e}^{x_1+x_2+x_3}.
    % \vspace{-0.8cm}
\end{equation*}
Let $L=1.0$ be the side length of $\Omega$.

The numerical results are presented in Figures \ref{fig:ex3-1}-\ref{fig:ex3-7} and Tables \ref{ex3-3}-\ref{ex3-4} for the nonsymmetric source problem with symmetric solution. Similar to Example 1, Figure \ref{fig:ex3-1} also supports the convergence results of Theorem \ref{thm:bvp_ts}. It is shown that the symmetrized two-scale finite element method (Algorithm \ref{alg:bvp_ts}) is still better than the other methods.

\begin{figure}[H]
\vspace{-0.2cm}
\begin{minipage}[t]{0.5\linewidth}
\centering
\includegraphics[width=8cm]{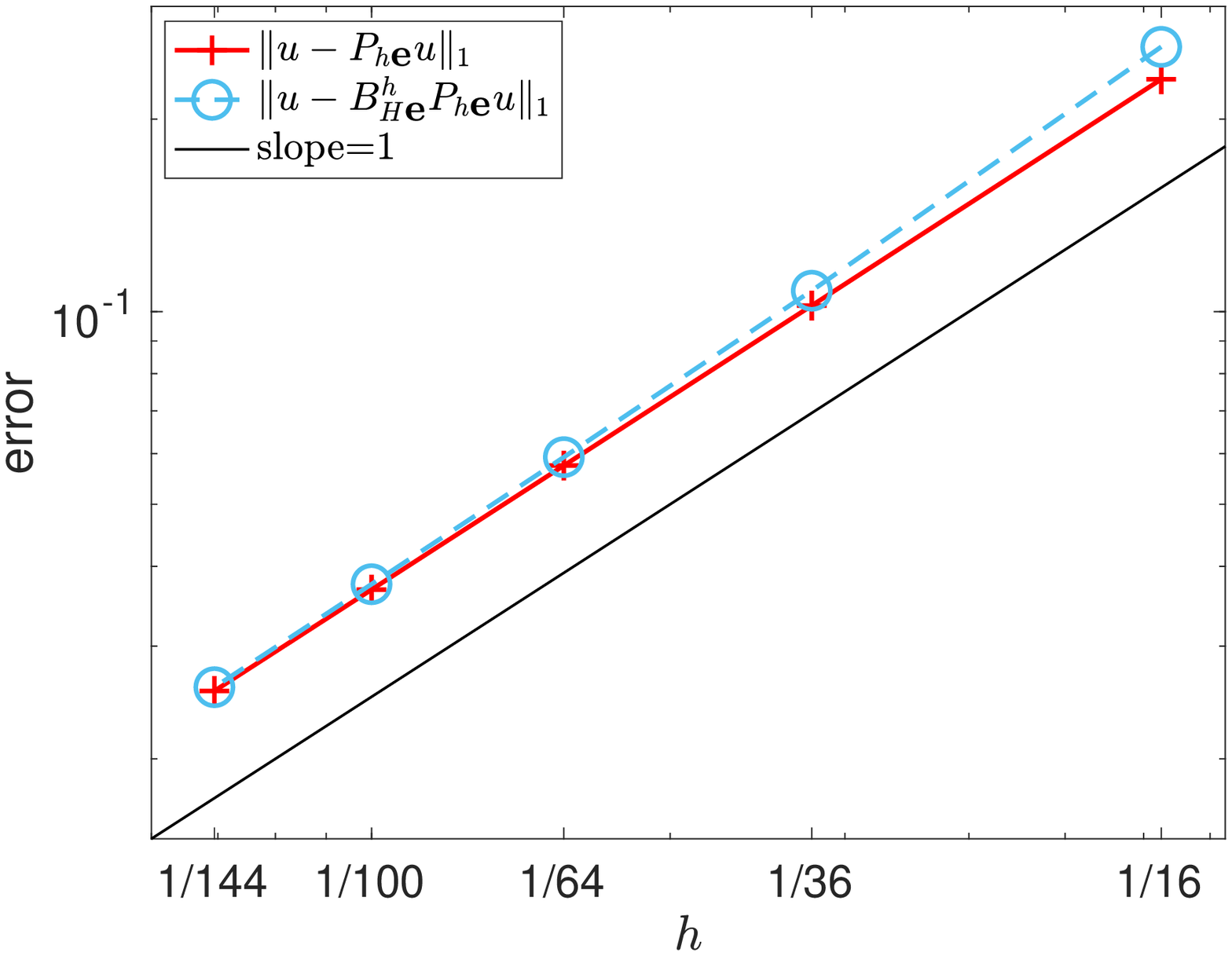}
\end{minipage}
\hfill
\begin{minipage}[t]{0.5\linewidth}
\centering
\includegraphics[width=8cm]{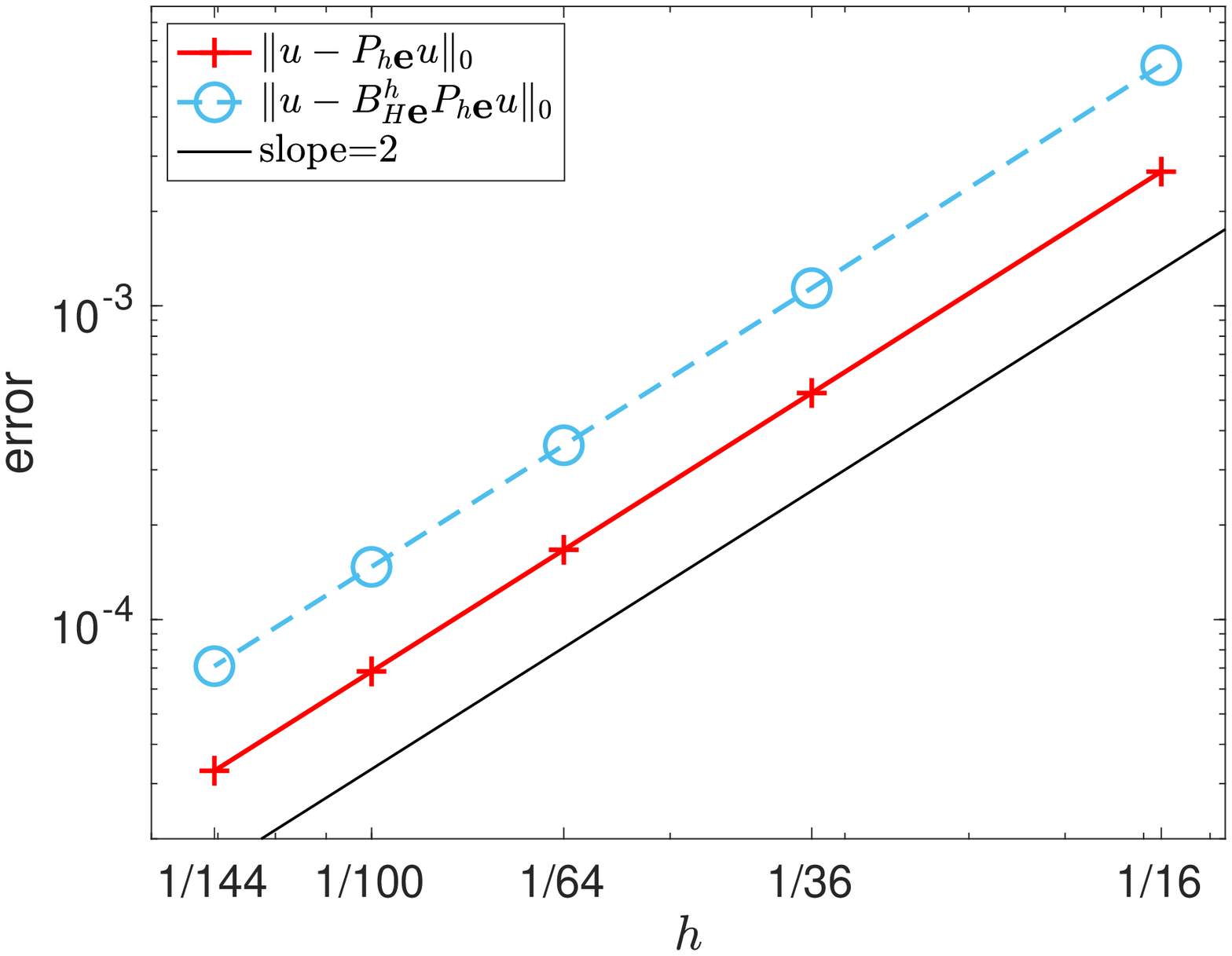}
\end{minipage}
\caption{\footnotesize(Example 2) The convergence curves of solutions in $H^1$ norm and $L^2$ norm by the standard FEM and Algorithm {\ref{alg:bvp_ts}}.}\label{fig:ex3-1}
\end{figure}

\begin{figure}[htbp]
%\vspace{-0.5cm}
\centering
\includegraphics[width=8cm]{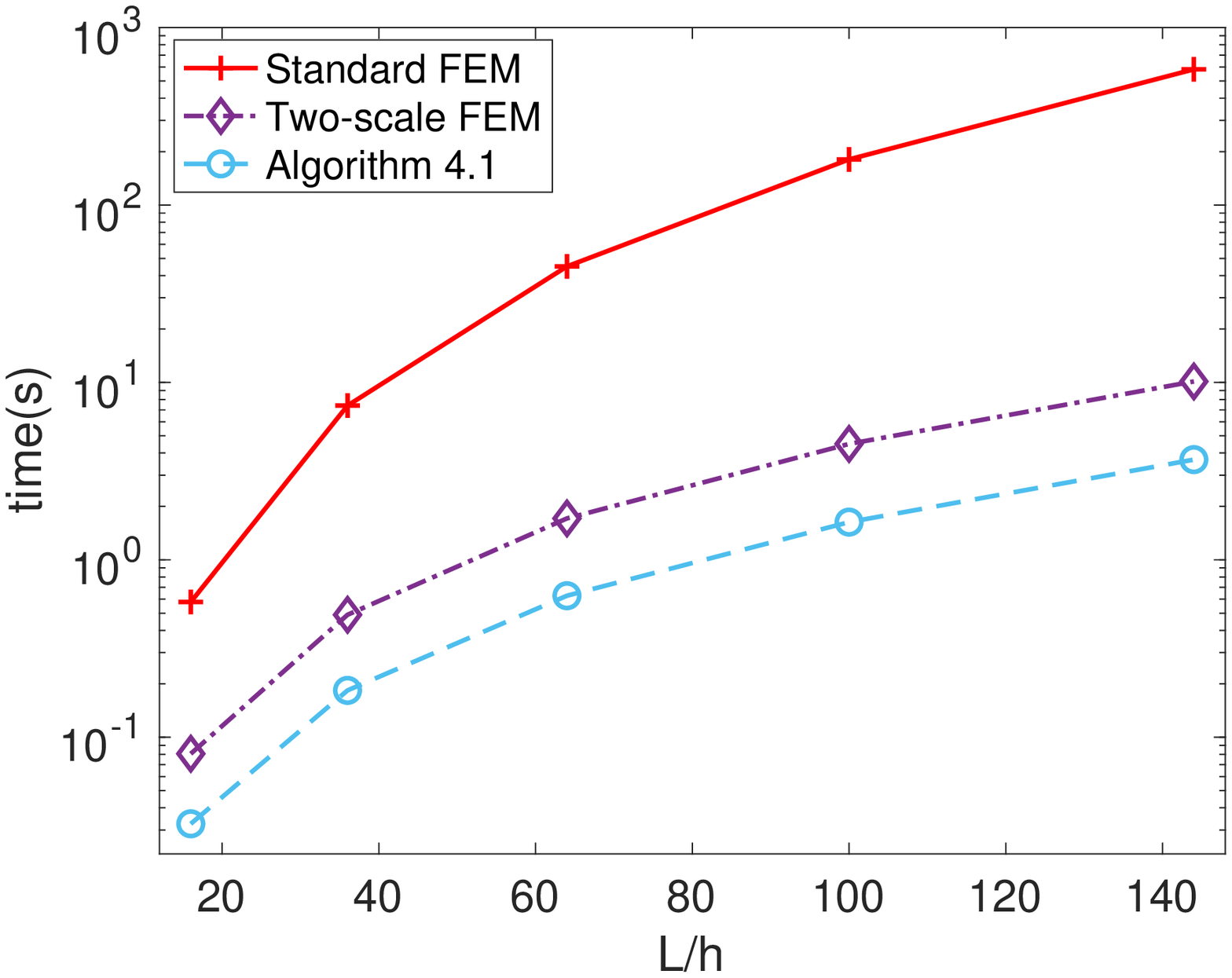}
\caption{\footnotesize(Example 2) The time consumptions of the three methods.}\label{fig:ex3-7}
\end{figure}

\begin{table}[htbp]
\caption{\footnotesize Example 2: The running time (s) by the two-scale FEM.}\label{ex3-3}
  \centering
    \begin{tabular}{|c|c|c|c|c|c|c|}
        \hline
         $\frac{L}{h}\times \frac{L}{H}\times \frac{L}{H}$ & $P_{H\textbf{e}}u$ & $P_{h\textbf{e}_1+H\hat{\textbf{e}}_1}u$ & $P_{h\textbf{e}_2+H\hat{\textbf{e}}_2}u$ & $P_{h\textbf{e}_3+H\hat{\textbf{e}}_3}u$ & \begin{minipage}{1.6cm}\vspace{0mm} \center $B_{H\textbf{e}}^hP_{h\textbf{e}}u$ \vspace{-2mm} \end{minipage} & Total \\
        \hline
        $16\times4\times4$    & 0.0062 & 0.0257 & 0.0243 & 0.0236 & 0.0008 & 0.0806 \\
        \hline
        $36\times6\times6$   & 0.0292 & 0.1510 & 0.1539 & 0.1506 &  0.0052 & 0.4899 \\
        \hline
        $64\times8\times8$   & 0.0677 & 0.5428 & 0.5385 & 0.5349 &  0.0275 & 1.7114 \\
        \hline
        $100\times10\times10$ & 0.1424 & 1.4185 & 1.4174 & 1.4132 &  0.1095 & 4.5010 \\
        \hline
        $144\times12\times12$ & 0.2577 & 3.1907 & 3.1658 & 3.1772 &  0.3328 & 10.1242 \\
        \hline
    \end{tabular}
\end{table}

\begin{table}[htbp]
\caption{\footnotesize Example 2: The running time (s) by Algorithm \ref{alg:bvp_ts}.}\label{ex3-4}
  \centering
    \begin{tabular}{|c|c|c|c|c|c|}
        \hline
         $\frac{L}{h}\times \frac{L}{H}\times \frac{L}{H}$ & $P_{H\textbf{e}}u$ & $P_{h\textbf{e}_1+H\hat{\textbf{e}}_1}u$ & $P_{h\textbf{e}_2+H\hat{\textbf{e}}_2}u,P_{h\textbf{e}_3+H\hat{\textbf{e}}_3}u$ & \begin{minipage}{1.6cm}\vspace{0mm} \center $B_{H\textbf{e}}^hP_{h\textbf{e}}u$ \vspace{-2mm} \end{minipage} & Total \\
        \hline
        $16\times4\times4$    & 0.0062 & 0.0257 & 0.0001 & 0.0005 & 0.0325 \\
        \hline
        $36\times6\times6$   & 0.0292 & 0.1510 & 0.0006 &  0.0028 & 0.1836 \\
        \hline
        $64\times8\times8$   & 0.0677 & 0.5428 & 0.0031 &  0.0137 & 0.6273 \\
        \hline
        $100\times10\times10$ & 0.1424 & 1.4185 & 0.0133 &  0.0570 & 1.6312 \\
        \hline
        $144\times12\times12$ & 0.2577 & 3.1907 & 0.0478 &  0.1748 & 3.6710 \\
        \hline
    \end{tabular}
\end{table}

\textbf{Example 3} Consider a model for the quantum harmonic oscillator:
 \begin{equation*}
 (-\frac{1}{2}\Delta +\frac{1}{2}r^2)u = \lambda u\ \ \ \ in \ \mathbb{R}^3.
 % \vspace{-0.8cm}
 \end{equation*}
 The minimum eigenvalue $\lambda=1.5$ and the associated eigenfunction $u=\pi^{-\frac{3}{4}}\text{e}^{-r^2/2}$. Since the eigenfunctions decay exponentially, in our calculation, the following eigenvalue problem is considered.

\begin{eqnarray*}
%\vspace{-1.8cm}
    \left\{
       \begin{aligned}
        (-\frac{1}{2}\Delta +\frac{1}{2}r^2)u &= \lambda u\ \ \ \ in \ \Omega, \\
        u &= 0\ \ \ \ \ \ on\  \partial \Omega,
       \end{aligned}
    \right.
    %\vspace{-0.8cm}
\end{eqnarray*}
where $\Omega=[-5.0,5.0]^3$ and $r=(x^2+y^2+z^2)^{\frac 1 2}$. Let $L=10.0$ be the side length of $\Omega$. The numerical results of the minimum eigenvalue and associated eigenfunction are presented in Figures \ref{ex5-1}-\ref{ex5-6} and Tables \ref{ex5-3}-\ref{ex5-4}. Figure \ref{ex5-1} supports the convergence results of Theorem \ref{thm:eig_ts}. For this linear eigenvalue problem with symmetric solution, our results illustrate the efficiency of the symmetrized two-scale finite element method (Algorithm {\ref{alg:eig_ts}}).

\begin{figure}[H]
\vspace{-0.2cm}
\begin{minipage}[t]{0.5\linewidth}
\centering
\includegraphics[width=8cm]{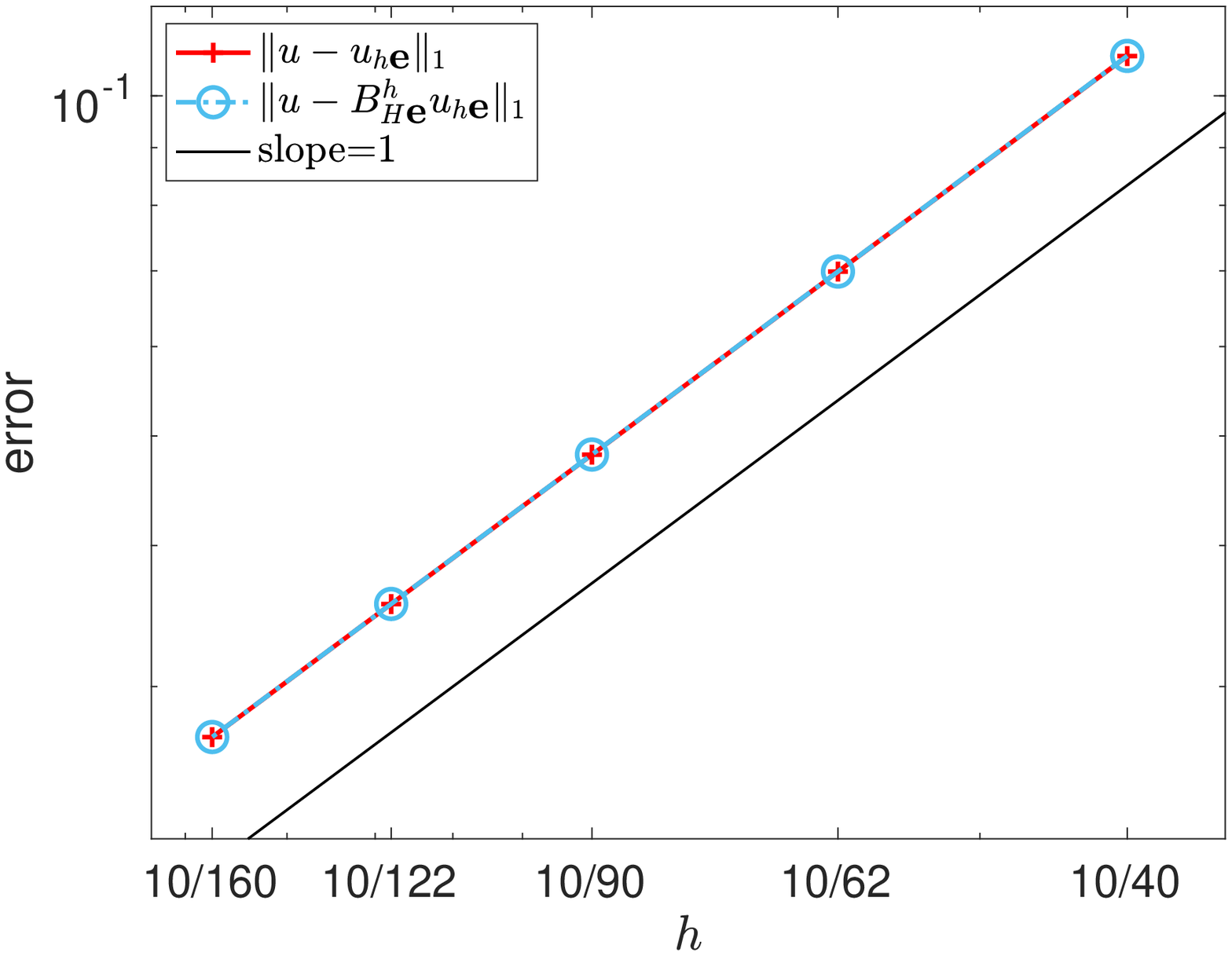}
\end{minipage}
\hfill
\begin{minipage}[t]{0.5\linewidth}
\centering
\includegraphics[width=8cm]{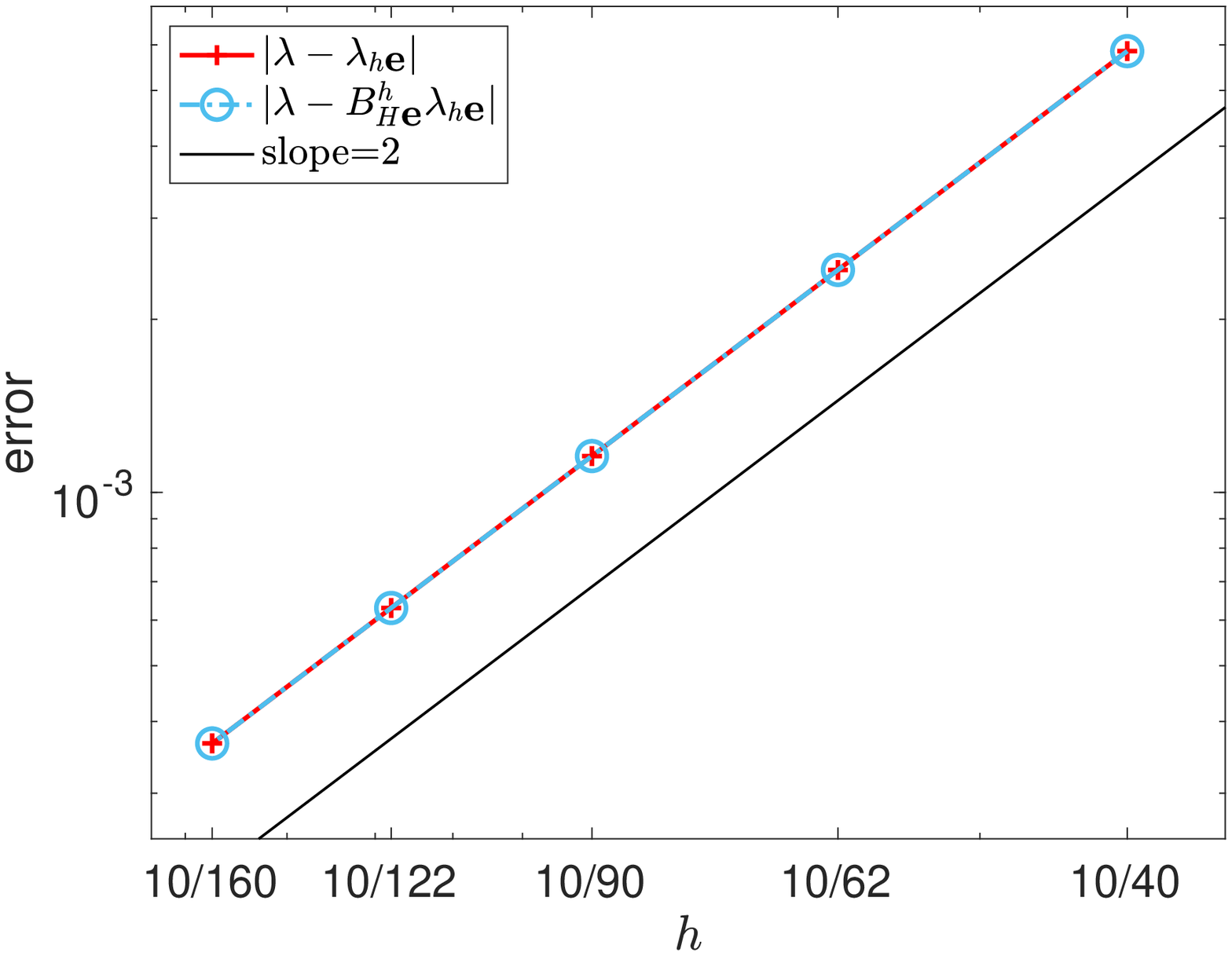}
\end{minipage}
\caption{\footnotesize(Example 3) The convergence curves of eigenvalue and eigenfunction by the standard FEM and Algorithm {\ref{alg:eig_ts}}.}\label{ex5-1}
\end{figure}

\begin{figure}[H]
% \vspace{-1cm}
\centering
\includegraphics[width=8cm]{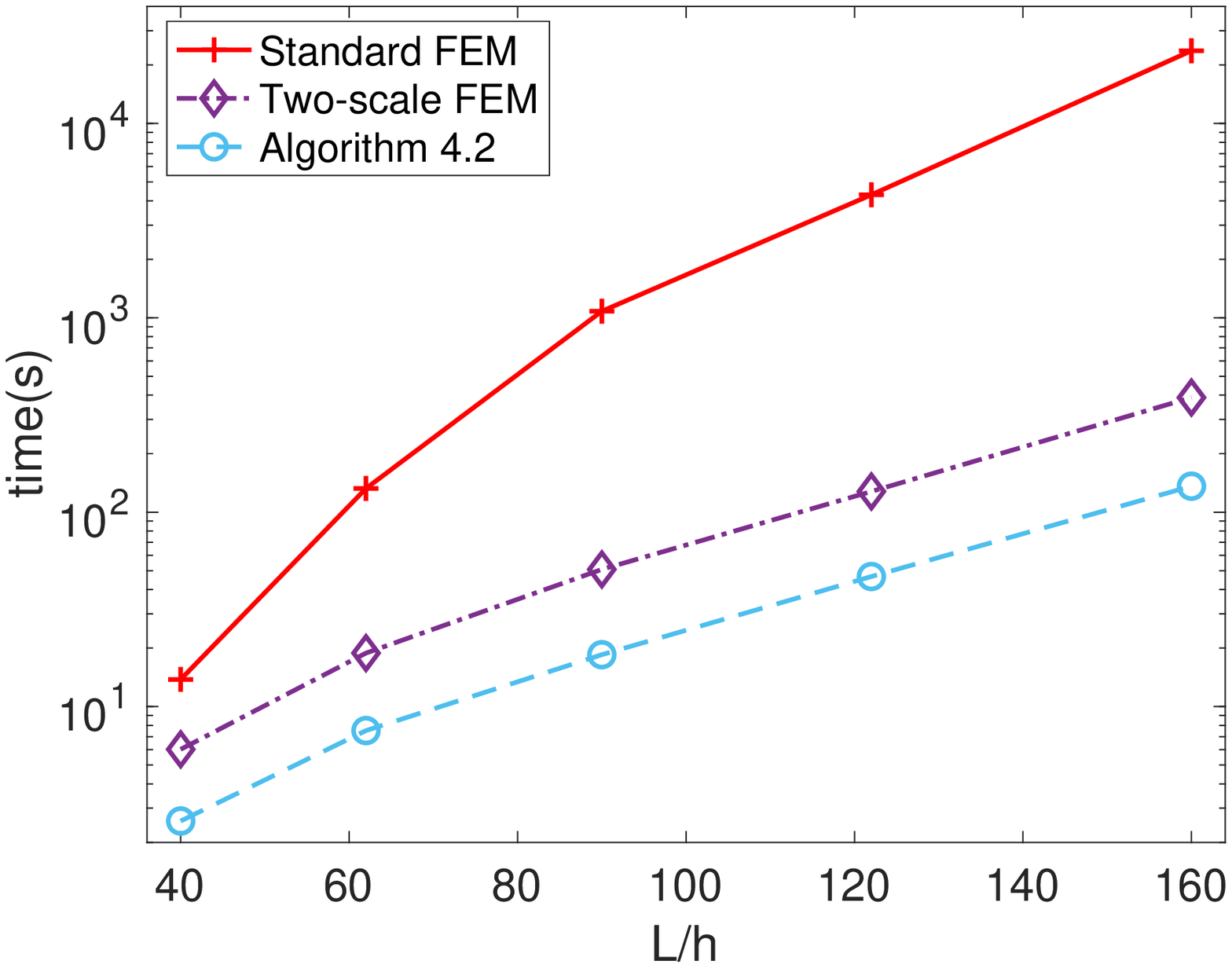}
\caption{\footnotesize(Example 3) The time consumptions of the three methods.}\label{ex5-6}
\end{figure}

\begin{table}[H]
\caption{\footnotesize Example 3: The running time (s) by the two-scale FEM.}\label{ex5-3}
  \centering
    \begin{tabular}{|c|c|c|c|c|c|c|c|}
        \hline
         $\frac{L}{h}\times \frac{L}{H}\times \frac{L}{H}$ & $u_{H\textbf{e}}$ & $u_{h\textbf{e}_1+H\hat{\textbf{e}}_1}$ & $u_{h\textbf{e}_2+H\hat{\textbf{e}}_2}$ & $u_{h\textbf{e}_3+H\hat{\textbf{e}}_3}$ & \begin{minipage}{1.5cm}\vspace{1mm} \center $B_{H\textbf{e}}^hu_{h\textbf{e}}$ \vspace{1mm} \end{minipage} & $B_{H\textbf{e}}^h\lambda_{h\textbf{e}}$ & Total \\
        \hline
        $40\times20\times20$ & 0.7060 & 1.7362  & 1.6241  & 1.8173  & 0.1405 & 0.0001 & 6.0242 \\
        \hline
        $62\times25\times25$ & 1.6529 & 5.3517  & 5.1389  & 6.1617  & 0.5373 & 0.0001 & 18.8426 \\
        \hline
        $90\times30\times30$ & 3.5823 & 13.2750  & 13.4933  & 18.8052  & 1.6285 & 0.0001 & 50.7844 \\
        \hline
        $122\times35\times35$ & 7.4273 & 35.3173 & 41.0032  & 40.0038 & 4.0751 & 0.0001 & 127.8268 \\
        \hline
        $160\times40\times40$ & 13.1982 & 113.4475 & 105.7242 & 146.7036 & 9.8352 & 0.0016 & 388.9103 \\
        \hline
    \end{tabular}
\end{table}

\begin{table}[H]
\caption{\footnotesize Example 3: The running time (s) by Algorithm \ref{alg:eig_ts}.}\label{ex5-4}
  \centering
    \begin{tabular}{|c|c|c|c|c|c|c|}
        \hline
        $\frac{L}{h}\times \frac{L}{H}\times \frac{L}{H}$ & $u_{H\textbf{e}}$ & $u_{h\textbf{e}_1+H\hat{\textbf{e}}_1}$ & $u_{h\textbf{e}_2+H\hat{\textbf{e}}_2},u_{h\textbf{e}_3+H\hat{\textbf{e}}_3}$ & \begin{minipage}{1.5cm}\vspace{1mm} \center $B_{H\textbf{e}}^hu_{h\textbf{e}}$ \vspace{1mm} \end{minipage} & $B_{H\textbf{e}}^h\lambda_{h\textbf{e}}$ & Total \\
        \hline
        $40\times20\times20$ & 0.7060 & 1.7362  & 0.0009  & 0.1338 & 0.0001 & 2.5770 \\
        \hline
        $62\times25\times25$ & 1.6529 & 5.3517  & 0.0029  & 0.5145 & 0.0001 & 7.5221 \\
        \hline
        $90\times30\times30$ & 3.5823 & 13.2750  & 0.0118 & 1.6162 & 0.0001 & 18.4854 \\
        \hline
        $122\times35\times35$ & 7.4273 & 35.3173 & 0.0341 & 3.8900 & 0.0001 & 46.6688 \\
        \hline
        $160\times40\times40$ & 13.1982 & 113.4475 & 0.0953 & 9.3284 & 0.0003 & 136.0697 \\
        \hline
    \end{tabular}
\end{table}

\textbf{Example 4} Consider the Kohn-Sham equation for the helium atom:
\begin{equation*}
        \left(-\frac{1}{2} \Delta  - \frac{2}{|r|} + \int \frac{\rho(r')}{|{r}- {r}'|} d {r}' + V_{xc}\right) u = \lambda u\ \ \ in\ \mathbb{R}^3,
\end{equation*}
with $\|u\|_{0,\mathbb{R}^3}=1$, where $|r|=(x^2+y^2+z^2)^{\frac 1 2}$ and $\rho=2|u|^2$. In our computation, we solve the following nonlinear eigenvalue problem \cite{Dai2008}: find $(\lambda, u)\in \mathbb{R}\times H^1_0(\Omega)$ such that $\|u\|_{0,\Omega}=1$ and
\begin{eqnarray}\label{eq:helium}
    \left\{
    \begin{aligned}
         \left(-\frac{1}{2} \Delta  -\frac{2}{|r|} + \int_\Omega \frac{\rho(r')}{|{r}- {r}'|} d {r}' + V_{xc}\right) u &= \lambda u\ \ \ in\ \Omega,\\
        u &= 0\ \ \ \ \ on \ \partial \Omega,
    \end{aligned}
    \right.
\end{eqnarray}
where $\Omega = [-5.0,5.0]^3$. We choose $V_{xc}(\rho)=-\frac 3 2 \alpha (\frac 3 \pi \rho)^{1/3}$ with $\alpha = 0.77298$. Let $L=10.0$ be the side length of $\Omega$.

For this nonlinear eigenvalue problem, it has been shown in \cite{hou2021} that the two-scale finite element method is more economic than the standard finite element method. Because the first eigenfunction of \eqref{eq:helium} is symmetric, the two-scale finite element discretization proposed in \cite{hou2021} can be also combined with Algorithm \ref{alg:symmetric-dd}. To illustrate this observation, numerical results are presented in Figure \ref{fig:ex7-7} and Table \ref{ex7-6}. One can see that by combining the idea of Algorithm \ref{alg:symmetric-dd} with the two-scale finite element method, the time consumption is reduced further.

\begin{figure}[H]
\vspace{-0.2cm}
\centering
\includegraphics[width=8cm]{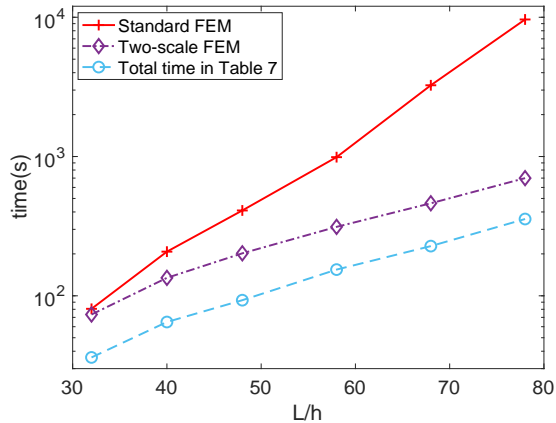}
\caption{\footnotesize (Example 4) The time consumptions of the three methods.}\label{fig:ex7-7}
\end{figure}
%
%\begin{table}[H]
%\caption{\footnotesize Example 4: The running time (s) by the two-scale FEM \cite{hou2021}.}\label{ex7-5}
%  \centering
%    \begin{tabular}{|c|c|c|c|c|c|c|c|c|}
%        \hline
%         $\frac{L}{h}\times \frac{L}{H}\times \frac{L}{H}$ & $u_{H\textbf{e}} $ & $u_{h\textbf{e}_1+H\hat{\textbf{e}}_1}$ & $u_{h\textbf{e}_2+H\hat{\textbf{e}}_2}$ & $u_{h\textbf{e}_3+H\hat{\textbf{e}}_3}$ & \begin{minipage}{1.5cm}\vspace{1mm} \centering $u_{H\textbf{e}}^h$ \vspace{1mm} \end{minipage} & $\lambda^h_{H\textbf{e}}$ &  Total \\
%        \hline
%        $32\times18\times18$ & 9.62  & 18.71  & 18.33  & 18.95 & 0.18 & 7.63 & 73.41 \\
%        \hline
%        $40\times20\times20$ & 15.11  & 34.77  & 33.41  & 35.98 & 0.31 & 14.98 & 134.55 \\
%        \hline
%        $48\times22\times22$ & 19.87  & 46.39  & 53.66  & 55.04 & 0.55 & 26.35 & 201.86 \\
%        \hline
%        $58\times24\times24$ & 27.12  & 79.33  & 77.15  & 80.55 & 0.94 & 47.52 & 312.61 \\
%        \hline
%        $68\times26\times26$ & 36.50  & 112.12 & 114.41  & 120.42 & 1.51 & 76.71 & 461.67 \\
%        \hline
%        $78\times28\times28$ & 49.10  & 187.78 & 147.62  & 195.85 & 2.27 & 116.98 & 699.60 \\
%        \hline
%    \end{tabular}
%\end{table}

\begin{table}[H]
\caption{\footnotesize Example 4: The running time (s) by combining the two-scale FEM \cite{hou2021} with Algorithm \ref{alg:symmetric-dd}.}\label{ex7-6}
  \centering
    \begin{tabular}{|c|c|c|c|c|c|c|c|}
        \hline
         $\frac{L}{h}\times \frac{L}{H}\times \frac{L}{H}$ & $u_{H\textbf{e}} $ & $u_{h\textbf{e}_1+H\hat{\textbf{e}}_1}$ & $u_{h\textbf{e}_2+H\hat{\textbf{e}}_2},u_{h\textbf{e}_3+H\hat{\textbf{e}}_3}$ & \begin{minipage}{1.5cm}\vspace{1mm} \centering $u_{H\textbf{e}}^h$ \vspace{1mm} \end{minipage} & $\lambda^h_{H\textbf{e}}$ &  Total \\
        \hline
        $32\times18\times18$ & 9.62  & 18.71  & 0.0006  & 0.16 & 7.55 & 36.10 \\
        \hline
        $40\times20\times20$ & 15.11  & 34.77  & 0.0012 & 0.29 & 14.55 & 64.84 \\
        \hline
        $48\times22\times22$ & 19.87  & 46.39  & 0.0015 & 0.54 & 26.05 & 93.00 \\
        \hline
        $58\times24\times24$ & 27.12  & 79.33  & 0.0028 & 0.91 & 46.61 & 154.25 \\
        \hline
        $68\times26\times26$ & 36.50  & 112.12 & 0.0042 & 1.47 & 76.90 & 227.41 \\
        \hline
        $78\times28\times28$ & 49.10  & 187.78 & 0.0062 & 2.22 & 116.20 & 355.92 \\
        \hline
    \end{tabular}
\end{table}

\section{Conclusions}
In this paper, a symmetrization algorithm is developed to perform a vector transformation associated with symmetric functions. By combining the symmetrization algorithm with the two-scale finite element methods, some symmetrized two-scale finite element methods are proposed for the elliptic source and eigenvalue problems with symmetric solutions. Numerical analyses and numerical results show that the symmetrized two-scale finite element methods save computational cost significantly compared with the corresponding two-scale finite element methods. The symmetrization algorithm can also be combined with the postprocessed two-scale finite element method to obtain efficient numerical methods \cite{LSZ11,LZ16}. Besides, some two-scale finite element discretizations have been developed for a class of integral equation and integrodifferential equation \cite{chen2011,Liu2007,Xu2004}, which can be also combined with the symmetrization algorithm to get more efficient algorithms. Moreover, we believe that our symmetrization algorithm is significant for solving higher dimensional problems on tensor product domains.

\section*{Acknowledgments}
The authors thank Professor Huajie Chen for enlightening discussions. P. Hou was partially supported by the National Natural Science Foundation of China (grant 11971066). F. Liu was partially supported by the National Natural Science Foundation of China (grant 11771467) and the disciplinary funding of Central University of Finance and Economics. A. Zhou was partially supported by the National Key R \& D Program of China under grants 2019YFA0709600 and 2019YFA0709601.

\bibliographystyle{plain}
\bibliography{references}

\begin{thebibliography}{10}

\bibitem{BaOs91}
I.~Babuska and J.E. Osborn.
\newblock Eigenvalue problems.
\newblock In {\em Handbook of Numerical Analysis}, volume~II, pages 641--787.
  North-Holland, Amsterdam, 1991.

\bibitem{Bachmayr2021}
M.~Bachmayr, G.~Dusson, and C.~Ortner.
\newblock Polynomial approximation of symmetric functions.
\newblock {\em arXiv}, 2109.14771, 2021.

\bibitem{chen2011}
H.~Chen, F.~Liu, N.~Reich, C.~Winter, and A.~Zhou.
\newblock Two-scale finite element discretizations for integrodifferential
  equations.
\newblock {\em J. Integ. Equ. Appl.}, 23:351--381, 2011.

\bibitem{Cia78}
P.G. Ciarlet.
\newblock {\em The Finite Element Method for Elliptic Problems}, volume~4 of
  {\em Studies in Mathematics and its Applications}.
\newblock North-Holland Publishing Co., Amsterdam, 1978.

\bibitem{Dai2008}
X.~Dai and A.~Zhou.
\newblock Three-scale finite element discretizations for quantum eigenvalue
  problems.
\newblock {\em SIAM J. Numer. Anal.}, 46:295--324, 2008.

\bibitem{Fang2013}
J.~Fang, X.~Gao, and A.~Zhou.
\newblock A symmetry-based decomposition approach to eigenvalue problems.
\newblock {\em J. Sci. Comput.}, 57:638--669, 2013.

\bibitem{GLZ08}
X.~Gao, F.~Liu, and A.~Zhou.
\newblock Three-scale finite element eigenvalue discretizations.
\newblock {\em BIT}, 48(3):533--562, 2008.

\bibitem{Han2022}
J.~Han, Y.~Li, L.~Lin, J.~Lu, J.~Zhang, and L.~Zhang.
\newblock Polynomial approximation of symmetric functions.
\newblock {\em arXiv}, 1912.01765, 2022.

\bibitem{hou2021}
P.~Hou and F.~Liu.
\newblock Two-scale finite element discretizations for nonlinear eigenvalue
  problems in quantum physics.
\newblock {\em Adv. Comput. Math.}, 47:59, 2021.

\bibitem{Joyner2008}
D.~Joyner.
\newblock {\em Adventures in group theory: {R}ubik's cube, {M}erlin's machine,
  and other mathematical toys}.
\newblock Johns Hopkins University Press, second edition, 2008.

\bibitem{LSZ11}
F.~Liu, M.~Stynes, and A.~Zhou.
\newblock Postprocessed two-scale finite element discretizations, part {I}.
\newblock {\em SIAM J. Numer. Anal.}, 49:1947--1971, 2011.

\bibitem{LZ06}
F.~Liu and A.~Zhou.
\newblock Two-scale finite element discretizations for partial differential
  equations.
\newblock {\em J. Comput. Math.}, 24:373--392, 2006.

\bibitem{LZ07}
F.~Liu and A.~Zhou.
\newblock Localizations and parallelizations for two-scale finite element
  discretizations.
\newblock {\em Commun. Pure Appl. Anal.}, 6(3):757--773, 2007.

\bibitem{Liu2007}
F.~Liu and A.~Zhou.
\newblock Two-scale {B}oolean {G}alerkin discretizations for {F}redholm
  integral equations of the second kind.
\newblock {\em SIAM J. Numer. Anal.}, 45:296--312, 2007.

\bibitem{LZ16}
F.~Liu and J.~Zhu.
\newblock Two-scale sparse finite element approximations.
\newblock {\em Sci. China Math.}, 59(4):789--808, 2016.

\bibitem{PZ99}
C.~Pflaum and A.~Zhou.
\newblock Error analysis of the combination technique.
\newblock {\em Numer. Math.}, 84:327--350, 1999.

\bibitem{Xu2004}
Y.~Xu and A.~Zhou.
\newblock Fast {B}oolean approximation methods for solving integral equations
  in high dimensions.
\newblock {\em J. Integ. Equ. Appl.}, 16:83--110, 2004.

\end{thebibliography}
\end{document}